\newtheorem{thm}{Theorem}[section]
\newtheorem{claim}[thm]{Claim}
\newtheorem{cor}[thm]{Corollary}
\newtheorem{lemma}[thm]{Lemma}
\newtheorem{prop}[thm]{Proposition}
\newtheorem{rmk}[thm]{Remark}
\title{The interior regularity of the Calabi flow on a toric surface}
\author{Xiuxiong Chen, Hongnian Huang  and Li Sheng}
\thanks{The second named author is financially supported by the FMJH (Fondation math\'ematique Jacques Hadamard).}
\date{11/25/2012}
\begin{document}

\maketitle

\begin{abstract}
Let $X$ be a toric surface with Delzant polygon $P$ and $u(t)$ be a solution of the Calabi flow equation on $P$. Suppose the Calabi flow exists in $[0, T)$. By studying local estimates of the Riemann curvature and the geodesic distance under the Calabi flow, we prove a uniform interior estimate of $u(t)$ for $t < T$.
\end{abstract}
\tableofcontents
\section{Introduction}
 According to Calabi \cite{Ca1, Ca2},  a K\"ahler metric is called extremal if the gradient vector field of its scalar curvature is a holomorphic vector field.  When this vector field vanishes, it is a constant scalar curvature K\"ahler (cscK) metric.  One of the core problems in K\"ahler geometry is to establish the existence of cscK metrics with appropriate algebraic conditions (c.f. Yau-Tian-Donaldson conjecture). It is now known that the existence of cscK metrics implies $K$-stability on a polarized K\"ahler manifold $X$ \cite{Stoppa, Ma2, D5, CT}. But the existence problem for general cscK metrics in a non-canonical class is largely open.\\
  
 In 1982, Calabi proposed to deform any K\"ahler potential in the direction of its scalar curvature:
\begin{equation}
\frac{\partial \varphi}{\partial t} = R_\varphi - \underline{R}, \label{CalabiflowComplex}
\end{equation}
where $\underline{R}$ is the average of $R_\varphi$.  This is a parabolic flow of 4th order and one
can establish short time existence for smooth initial data \cite{ChenHe}. The first named author
conjectured  that the Calabi flow exists for all time (c.f. \cite{ChenHe3}). In a joint work with W. He, the first named author proved that the Calabi flow can be extended indefinitely if the $Ric$ curvature is bounded \cite{ChenHe}.  More recently, joint with R. Feng, the second named author proved the global existence of the Calabi flow in a special  K\"ahler manifold $X=\mathbb{C}^2/(\mathbb{Z}^2 + i \mathbb{Z}^2)$ \cite{FH}. ~ Since the work of \cite{ChenHe}, there are extensive work in the subject of the Calabi flow, c.f., \cite{TW, Fi, He, H1, HZ, St, H3, Sz} etc.\\

In a remarkable series of work \cite{D1, D2, D3, D4}, Donaldson proved the existence of constant scalar curvature K\"ahler metric in a polarized $K$-stable toric surface. In \cite{ZZ}, X. Zhu and B. Zhou studied the weak solution of the Abreu's equation. In \cite{CLS}, B. Chen, A.-M. Li and the third named author extended Donaldson's work to the case of extremal K\"ahler metric in toric surface. \\

Inspired by Donaldson's program on toric surfaces,  the first and second named author studied the long time existence of the Calabi flow \cite{CH} in toric K\"ahler surface. In our current paper,  let $X$ be a toric surface with Delzant polygon $P$ and $u$ be a sympletic potential in $P.\;$ By Abreu's work, we can reduce 
the scalar curvature equation to
\begin{equation}
A_u =  - U^{ij} \left(\frac{1}{\det(D^2 u)} \right)_{ij} \label{cscKSymp}
\end{equation}
where $U^{ij}$ is the cofactor matrix of $u_{ij} .\;$  Following the tradition, we denote the average of scalar curvature $\underline{R}$ as $\underline{A}$ in toric settings.  Suppose that $u(t)$ is a one parameter family of symplectic potentials satisfying the Calabi flow equation.  Then,\begin{equation}
{{\partial u}\over {\partial t}}  = \underline{A} + U^{ij} \left(\frac{1}{\det(D^2 u)} \right)_{ij}.
\label{CalabiflowSymp}
\end{equation}
  
Suppose the Calabi flow exists in the time interval $[0,T)$. We then have the following theorem. 
\begin{thm}
\label{main}
For any constant $\epsilon_0 > 0$, let  $P_{\epsilon_0}$ be the subset containing all the points in $P$ whose Euclidean distance to $\partial P$ is greater than $\epsilon_0$. Then there exist constant $C(\epsilon_0)$ and $ C(k, \epsilon_0)$ for all $k \in \mathbb{Z}^+$ such that for any $t < T$ and any point $x \in  P_{\epsilon_0}$, we have
$$
 (D^2 u)(t, x) > C(\epsilon_0),  \quad
|u|_{C^k} < C(k, \epsilon_0).
$$
\end{thm}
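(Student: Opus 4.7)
\emph{Proof plan.} The starting point is that the Calabi flow (\ref{CalabiflowSymp}) is the downward gradient flow of the Calabi energy, so
\[
\int_P \bigl(A_{u(t)} - \underline{A}\bigr)^2 \, dx \;\le\; \int_P \bigl(A_{u(0)} - \underline{A}\bigr)^2 \, dx
\]
for every $t\in[0,T)$. The overall strategy is to convert this global $L^2$ bound on the scalar curvature into a pointwise interior bound on the Riemann curvature of $\omega_u$, and then to use the toric correspondence between the K\"ahler data on $X$ and the convex function $u$ on $P$ to read off two-sided bounds on $D^2 u$, from which higher regularity of $u$ will follow by a parabolic bootstrap.

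The first substantive step is a local Riemann curvature estimate. I would write down an evolution inequality for $|\mathrm{Rm}|^2$ under the Calabi flow, multiply by a cutoff supported on a set slightly larger than $P_{\epsilon_0}$, and run a Moser iteration using a Sobolev inequality on $(X,\omega_u)$. Since the volume of $\omega_u$ is a topological constant, what is missing for such an iteration is a non-collapsing statement for the geometry over $P_{\epsilon_0}$; this is supplied by the second main ingredient.

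To make the localized curvature estimate effective, and to obtain the advertised bound on $D^2u$ at the end, one needs to know that a fixed Euclidean neighbourhood of $x\in P_{\epsilon_0}$ contains an $\omega_u$-geodesic ball of definite size, or equivalently that the $\omega_u$-geodesic distance from $P_{\epsilon_0}$ to $\partial P$ is bounded below uniformly in $t<T$. This distance estimate is the principal obstacle. Because the Calabi flow is fourth order one cannot invoke a direct distance maximum principle; instead, I would compare $u(t)$ to the standard Guillemin potential near $\partial P$ and use the Calabi energy bound together with the convexity and boundary behaviour of $u$ to control the deviation. In practice the curvature estimate and the distance estimate must be developed in parallel, each bootstrapping the other, and this coupling is where I expect the bulk of the technical work to lie.

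Once a local $L^\infty$ bound on $|\mathrm{Rm}|$ and a lower bound on the $\omega_u$-distance to $\partial P$ are in hand, they combine to show that $\omega_u$ is uniformly comparable to a background metric over $P_{\epsilon_0}$; in the toric dictionary this forces the eigenvalues of $D^2 u$ to be pinched both from above and from below, yielding the asserted lower bound $D^2u(t,x)>C(\epsilon_0)$. With $D^2u$ thus controlled, equation (\ref{CalabiflowSymp}) becomes a uniformly parabolic quasilinear fourth-order PDE on $P_{\epsilon_0}$, and standard parabolic Schauder theory together with a routine bootstrap then deliver the higher-order estimates $|u|_{C^k}<C(k,\epsilon_0)$.
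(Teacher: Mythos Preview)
Your outline has the right endgame (curvature bound plus distance bound $\Rightarrow$ two-sided control of $D^2u$ $\Rightarrow$ bootstrap), but the mechanism you propose for the curvature bound does not work here, and your account of how the distance bound arises is off.

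The central gap is the appeal to Moser iteration via a Sobolev inequality on $(X,\omega_{u(t)})$. In this setting there is \emph{no} uniform Sobolev constant along the flow; the paper says so explicitly when contrasting with \cite{ChenHe2}. Fixed volume does not give you Sobolev control, and the non-collapsing you hope to extract from the distance estimate would itself already require the curvature bound you are trying to prove. So the iteration cannot be launched. The paper replaces Moser iteration by a blow-up argument: one assumes the scale-invariant quantity $Q(t,x)\,d_{u(t)}^2(x,\partial P_{\epsilon_0})$ blows up, rescales so that $Q=1$ at the basepoint, and then derives localized integral estimates $\int_X f\,|\nabla^k Rm|^2\le C(k)$ with a carefully constructed toric-invariant cutoff $f$. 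Pointwise control of $|\nabla^k Rm|$ is then obtained not from Sobolev but from the toric $M$-condition and the argument in the appendix of \cite{FH}, after which one passes to a limit and reaches a contradiction. A key technical step you do not anticipate is controlling how the geodesic distance to $\partial P_{\epsilon_0}$ changes between time slices after rescaling; this is done by partitioning a minimizing geodesic into sets according to the concentration of $|Rm|^2$ and the oscillation of the metric, and bounding the measure of the bad sets using the drop in Calabi energy.

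Separately, the distance lower bound is not obtained by comparing $u(t)$ to the Guillemin potential near $\partial P$, nor is it bootstrapped with the curvature estimate. It is proved first and independently: from the Calabi-energy and Mabuchi-distance monotonicity one extracts the uniform integral bound $\int_P \mathrm{Tr}\,(u^{ij}(t))\,d\mu<C$, and then a line-integral argument (if $D^2u$ is small in some direction at a point, then $\mathrm{Tr}\,(u^{ij})$ is large along a segment through that point) forces the Riemannian length of any curve crossing from $\partial P_{2\epsilon_0}$ to $\partial P_{\epsilon_0}$ to be bounded below.
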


This is a parabolic version of interior estimates of Donaldson \cite{D2} where he proved the same estimates for constant scalar curvature K\"ahler toric metric.  In Donaldson's proof,
it is crucial that the scalar curvature is uniform bounded.  Viewing equation (\ref{cscKSymp}) as a second order elliptic equation on ${1\over {\det(D^2 u)}},\;$ Donaldson controlled  the lower bound of  $\det (D^2 u)$ in $P_{\epsilon_0}$ by maximum principle via some cleverly constructed barrier functions.  He obtained the upper bound by adopting the techniques of Trudinger and Wang \cite{TrWa, TrWa1} which also relies on the construction of the barrier functions and the maximum principle. Then appealing to the real linearized Monge-Amp\`ere theory \cite{CF, CG} and Schauder's estimates, he obtained 
$$
 (D^2 u)(x) > C(\epsilon_0),  \quad
|u|_{C^k} < C(k, \epsilon_0).
$$
In our case, however, we only have $L^2$ control of $A_t$ along the Calabi flow. A pointwise
 bound on the $\det(D^2 u)$ seems to be beyond reach due to the lack of maximum principle. Thus,  we are not able to apply Donaldson's method in our paper. A crucial step is to prove the following intermediate theorem.

\begin{thm}
\label{key}
For every small constant $\epsilon_0 > 0$, there exists $C(\epsilon_0) > 0$ such that for any $t < T$ and $x \in P_{\epsilon_0}$, we have
$$
Q(t, x) d_{u(t)}^2(x, \partial P_{\epsilon_0}) < C(\epsilon_0),
$$
where $Q(t, x) = (|Rm| + |\nabla Rm|^{2/3} + |\nabla^2 Rm|^{1/2}) (t,x).$
\end{thm}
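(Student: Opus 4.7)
The plan is a proof by contradiction using parabolic rescaling, Cheeger-Gromov compactness, and rigidity of the blow-up limit. The curvature quantity $Q$ is designed so that $Q \cdot d^2$ is scale invariant: under parabolic rescaling $\tilde g = \lambda^2 g$, $\tilde t = \lambda^4 t$, each of $|Rm|$, $|\nabla Rm|^{2/3}$, $|\nabla^2 Rm|^{1/2}$ scales as $\lambda^{-2}$ while $d^2$ scales as $\lambda^2$. Assume the conclusion fails: there exists a sequence $(t_k, x_k) \in [0,T) \times P_{\epsilon_0}$ with $Q(t_k,x_k)\,d_{u(t_k)}^2(x_k,\partial P_{\epsilon_0}) \to \infty$. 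After a standard Hamilton-style point-picking argument in the $g(t_k)$-metric, I may assume that $Q$ is essentially maximized on the $g(t_k)$-geodesic ball $B_{r_k}(x_k)$ and that $\lambda_k r_k \to \infty$, where $\lambda_k^2 := Q(t_k,x_k)$.

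Next I perform the rescaling $\tilde g_k(s) := \lambda_k^2\, g(t_k + s\lambda_k^{-4})$; each $\tilde g_k$ is again a solution of the Calabi flow. By construction $\tilde Q_k(0,x_k) = 1$, and the Riemann tensor together with its first two covariant derivatives are uniformly bounded on $\tilde g_k$-balls of radius $\tilde r_k := \lambda_k r_k \to \infty$. A Shi-type bootstrap for the Calabi flow (local higher-order interior estimates as in \cite{ChenHe, CH}) then promotes this to uniform $C^m$ bounds on all covariant derivatives of curvature on any fixed parabolic neighborhood of $(0,x_k)$. Combined with a non-collapsing estimate — where it is essential that $x_k \in P_{\epsilon_0}$, so that the $T^2$-orbits remain uniformly controlled under the rescaling — Cheeger-Gromov-Hamilton compactness yields a smooth pointed limit Calabi flow $(\tilde X_\infty, \tilde g_\infty(s), \tilde x_\infty)$ on a complete $4$-manifold, with $\tilde Q_\infty(0, \tilde x_\infty) = 1$.

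The contradiction should come from the Calabi energy. In real dimension four, $E(g) := \int R^2\, dV$ is scale invariant, and it decreases monotonically along the Calabi flow with $\frac{dE}{dt} = -\int (\Delta R)^2\, dV$; hence $E(t)$ converges as $t \to T^-$ and the rescaled energy drop over any fixed rescaled time interval tends to zero. In the limit this forces $\Delta R_{\tilde g_\infty} \equiv 0$. Since $\int_{\tilde B_{\tilde R}(\tilde x_\infty)} R_{\tilde g_\infty}^2\, dV \le E_0$ independently of $\tilde R$, we have $R_{\tilde g_\infty} \in L^2(\tilde X_\infty)$, and integration by parts on the complete non-collapsed limit then yields $R_{\tilde g_\infty} \equiv 0$. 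So the limit is a complete scalar-flat K\"ahler $4$-manifold with a residual toric symmetry and nontrivial curvature at $\tilde x_\infty$; a Liouville-type rigidity for complete scalar-flat K\"ahler metrics with toric symmetry should force the limit to be flat, contradicting $\tilde Q_\infty(0, \tilde x_\infty) = 1$.

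The main obstacle I expect is the compactness/non-collapsing step together with the subsequent rigidity of the limit. Because the blow-up is taken at interior points of the polygon where the $T^2$-action is free, one has to argue that the $T^2$-orbits do not collapse and that $D^2 u(t_k)$ does not degenerate after rescaling, so that the limit is a genuinely smooth complete K\"ahler manifold (morally $\mathbb{C}^2$, $\mathbb{C}\times\mathbb{C}^*$, or $\mathbb{C}^*\times\mathbb{C}^*$ with a toric-invariant metric) rather than a collapsed quotient. Establishing that such a limit must in fact be flat — the Liouville-type statement invoked above — is where the Delzant polygon geometry must really do work, and is the most delicate part of the plan.
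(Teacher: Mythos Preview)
Your overall skeleton (contradiction via parabolic blow-up normalized so that $Q=1$ at the center, with the limit forced to be flat) matches the paper's, but the two steps you treat as routine --- the Shi-type bootstrap and the non-collapsing/compactness --- are exactly where the substantive work lies, and as stated they fail. The estimates from \cite{ChenHe} require a global curvature bound and a Sobolev inequality to pass from $\int_X |\nabla^k Rm|^2$ to pointwise control; neither is available here. Moreover, point-picking bounds $Q(t,x)$ only where $d_{u(t)}(x,\partial P_{\epsilon_0})$ is large \emph{at time $t$}, so before one even has a parabolic box with bounded curvature one must show that the blow-up center stays far from $\partial P^{(i)}_{\epsilon_0}$ for $t\in[-1,0]$. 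The paper proves this via the monotonicity of the Calabi energy, which bounds $\int_{-1}^{0}\!\int_P u^{ik}u^{jl}A_{ij}A_{kl}\,d\mu\,dt$, together with a delicate decomposition of the minimizing geodesic at $t=-1$ into sets of high $|Rm|^2$-concentration, large metric oscillation, and a good remainder. It then evolves \emph{localized} integrals $\int_X f\,|\nabla^k Rm|^2$ with a cut-off $f$ built from the orbit-distance on the polygon $P$ (not the Riemannian distance on $X$), proves $\partial_t\int_X f\,|\nabla^k Rm|^2 \le -\tfrac14\int_X f\,|\nabla^{k+2}Rm|^2 + C$, and converts the resulting $L^2$ bounds to pointwise bounds via the $M$-condition and Donaldson's control of $D^2 u$, not via Sobolev embedding. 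None of this is in your outline.

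Your compactness step is also circular: a Cheeger--Gromov injectivity-radius lower bound on the rescaled $4$-manifold amounts to a two-sided bound on $D^2 u_i$ at the center, which is essentially the content of Theorem~\ref{main} --- the statement this theorem is meant to feed. The paper avoids $4$-dimensional Cheeger--Gromov compactness entirely: once all $|\nabla^k Rm|$ are pointwise bounded near $p_0$, it normalizes $D^2 u^{(i)}(p_0)=I_2$, takes a limit of symplectic potentials on $\mathbb{R}^2\times\mathbb{T}^2$ as in \cite{FH}, and obtains the contradiction from the limit being flat while $Q^{(\infty)}(p_0)=1$. The scalar-flat Liouville theorem you invoke is neither proved nor needed.
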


One immediate remark is that,  even if this theorem holds, we do not have control on curvature in the interior, unless we can prove that the geometric distance from interior to the boundary is non-trivial.  As a first step in our paper, we prove the following:
 
\begin{thm}
\label{distance}
For any small $\epsilon_0 > 0$, there exists a constant $C(\epsilon_0)$ such that for any $t < T$, we have 
$$
d_{u(t)}(P_{\epsilon_0}, P_{2\epsilon_0}) > C(\epsilon_0).
$$
In other words, the interior of the polygon contains some geometric ball of fixed size during the flow at finite times.
\end{thm}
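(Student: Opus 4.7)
The plan is to combine the interior $L^\infty$-type bound on the symplectic potential $u(t)$ coming from \cite{CH} with the monotonicity of the Mabuchi K-energy along the Calabi flow, and then to convert these integrated bounds into a lower bound on the $u(t)$-geodesic distance across the strip $S := P_{\epsilon_0}\setminus P_{2\epsilon_0}$.

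\emph{Step 1 (normalization and interior gradient bound).} Since the metric $u_{ij}$ is invariant under the addition of an affine function, I would first normalize $u(t)$ so that $u(t)\geq 0$ and $\min u(t)=0$. Using the estimates from \cite{CH}, this normalized potential enjoys a uniform bound $|u(t)|\leq M(\epsilon_0)$ on $P_{\epsilon_0/4}$; convexity of $u(t)$ then forces $|\nabla u(t)|\leq C_1(\epsilon_0)$ on $P_{\epsilon_0/2}$.

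\emph{Step 2 (lower bound on the $u$-area of $S$).} The Mabuchi K-energy in toric form is $\mathcal{M}(u)=-\int_P\log\det(D^2u)\,dx+L(u)$, and it is non-increasing along the Calabi flow. Combined with control of the linear functional $L(u(t))$ coming from Step 1, this yields $\int_P\log\det(D^2u(t))\,dx\geq -C_0$. Applying Jensen's inequality (concavity of $\log$) gives
\[
\mathrm{Area}_{u(t)}(S) \;=\; \int_S \sqrt{\det(D^2u(t))}\,dx \;\geq\; c_1(\epsilon_0) \;>\; 0.
\]

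\emph{Step 3 (upper bound on the $u$-length of a separating curve).} By integration by parts against the gradient bound, $\int_S \Delta u(t)\,dx\leq C_2(\epsilon_0)$. For the tangent direction $\tau$ along a level curve, $u_{\tau\tau}\leq \Delta u(t)$, so Cauchy--Schwarz yields $\int_S \sqrt{u_{\tau\tau}}\,dx\leq (|S|\cdot C_2)^{1/2}$. Foliating $S$ by the level sets $\partial P_\sigma$ of the Euclidean distance to $\partial P$ and applying the coarea formula,
\[
\int_{\epsilon_0}^{2\epsilon_0} L_{u(t)}(\partial P_\sigma)\,d\sigma \;=\; \int_S \sqrt{u_{\tau\tau}}\,dx \;\leq\; C_3(\epsilon_0),
\]
so there exists $\sigma^{\ast}\in(\epsilon_0,2\epsilon_0)$ with $L_{u(t)}(\partial P_{\sigma^{\ast}})\leq C_3(\epsilon_0)/\epsilon_0$.

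\emph{Step 4 (isoperimetric conclusion).} Since $S$ has the topology of an annulus and $\partial P_{\sigma^{\ast}}$ is a closed curve separating $P_{2\epsilon_0}$ from $P\setminus P_{\epsilon_0}$, one applies a Cheeger-type coarea argument to the $u(t)$-distance function from $\partial P_{\sigma^{\ast}}$: the area lower bound of Step 2 together with the length upper bound of Step 3 forces every path across the strip to be $u(t)$-long, giving the desired $d_{u(t)}(P_{\epsilon_0},P_{2\epsilon_0})\geq C(\epsilon_0)$.

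The hardest part will be Step 4: because the $u(t)$-metric on $S$ may be highly anisotropic, a pointwise lower bound on geodesic distance does not follow from integrated area/length estimates alone. The argument must exploit the annular topology of $S$ (so that every path crossing the strip must cross $\partial P_{\sigma^{\ast}}$) together with the convexity of $u(t)$ in order to guarantee that the $u$-area decomposes regularly along the $u$-distance function from the separating curve; the $L^\infty$ bound from Step 1, coming from \cite{CH}, is what ultimately gives the isoperimetric estimate a uniform constant depending only on $\epsilon_0$.
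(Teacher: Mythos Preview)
Your outline has two genuine gaps, and the paper's argument proceeds along a rather different line.

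\textbf{Step 2 does not localize.} The K--energy bound gives a lower bound on $\int_P \log\det(D^2u)$, but Jensen's inequality applied on $S$ requires a lower bound on $\int_S \log\det(D^2u)$. To pass from one to the other you would need an \emph{upper} bound on $\int_{P\setminus S}\log\det(D^2u)$, i.e.\ some pointwise or integrated control on $\det(D^2u)$ from above in $P_{2\epsilon_0}$ and near $\partial P$. That is precisely the kind of Hessian control the whole argument is meant to produce; at this stage you have only the $M$--condition, which bounds $\int_\ell u_{\nu\nu}$ along segments but gives no pointwise bound on $\det(D^2u)$. So the area lower bound on $S$ is not established.

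\textbf{Step 4 is not an inequality that holds.} Even granting Step 2, an area lower bound together with a length upper bound on a \emph{single} separating curve does not force a width lower bound. A coarea argument for the $u$--distance $\rho$ from $\Gamma=\partial P_{\sigma^\ast}$ would read $\mathrm{Area}_u(\{\rho<r\})=\int_0^r L_u(\{\rho=s\})\,ds$, and to extract a lower bound on $r$ you would need $L_u(\{\rho=s\})$ controlled for \emph{all} $s$, not just $s=0$. Without curvature bounds the parallel curves $\{\rho=s\}$ can be arbitrarily long, so nothing prevents all of the $u$--area from sitting in a thin $u$--neighborhood of $\Gamma$. Convexity of $u$ and the annular topology do not by themselves supply the missing length bounds. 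You flagged this step as the hardest, but as written it is not a gap to be filled; the inequality simply fails in general.

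\textbf{What the paper does instead.} The paper avoids both issues by working with a different controlled quantity: using the Calabi--Chen distance--decreasing property and the decay of the Calabi energy, together with Donaldson's integration--by--parts identity, one obtains a uniform bound $\int_P u_{ij}(0)u^{ij}(t)\,d\mu<C$, hence $\int_{P_{\epsilon_0/2}}\mathrm{Tr}\bigl((D^2u)^{-1}\bigr)<C$. The distance bound then follows by contradiction: if a geodesic $c$ from $\partial P_{\epsilon_0}$ to $\partial P_{2\epsilon_0}$ has small $u$--length, then at most points of $c$ the directional Hessian $u_{ij}c'^ic'^j$ is small, so $\mathrm{Tr}\bigl((D^2u)^{-1}\bigr)$ is large there. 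A propagation lemma (Proposition~3.6), valid along any line $\ell$ with bounded $\int_\ell|Rm|^2$ (and a Fubini argument shows most transversal lines are of this type), spreads this largeness over a set of definite Euclidean measure, forcing $\int\mathrm{Tr}\bigl((D^2u)^{-1}\bigr)$ to be large and contradicting the bound above. The key input you are missing is this link between a small directional Hessian at a point and a large integral of $\mathrm{Tr}(u^{ij})$ along a transversal segment, which uses the $M$--condition together with the $L^2$ curvature bound in an essential way.
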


Now, we discuss the main ideas in this paper.\\
 
We use a  blow-up analysis method adopted from \cite{ChenHe2} \cite{ChenHe3}.  However, there are substantial new difficulties arising in our settings:\\

\begin{enumerate}
\item Unlike \cite{ChenHe2}, we do not have a uniform Sobolev constant bound here.  This
causes troubles in both ends: to obtain convergence of the re-scaling sequence, to classify the ``bubble"  in the limit as well as to 
control the higher derivatives (so that we can ``bootstrap" convergence to draw contradictions).  The lack of injectivity radius control can be overcomed, if
we know that the scalar curvature is a constant. Then,  we can study equation (\ref{cscKSymp})
in the tangential space and obtain  the higher derivatives of the curvature there.  This method is not available
to us since we only have integral estimates of of curvature functions.   To overcome the difficulty of collapsing, we needs to uniformly control $|\nabla Rm|$ after blowing up as in \cite{FH}. \\

\item In both \cite{ChenHe2} and \cite{ChenHe3}, they blow up at the  global maximum of the curvature function $|Rm|$ at each time slice.
Therefore, one has curvature control globally (over both forward time and space) after blowing up. 
For Ricci flow,  W. Shi \cite{Shi} can obtain higher derivatives control via maximum principle. 
With similar ideas in mind, but using integral estimate,  W. He and the first named author \cite{ChenHe2}  control
$$
\int_X |\nabla^k Rm|^2 ~ dg
$$
instead. Here curvature uniform bound is crucial in the long calculations.
Then the Sobolev constant bound enables them to obtain higher regularities control of $Rm$, i.e., $|\nabla^k Rm|$. \\

There are two immediate difficulties in estimating the evolution of the above integral in our setting. First, the geometrical quantity $Q(t, x)$ is not known to be bounded in any time slice $t<0$ after blowing up. Second (even in the absence of the first difficulty), it is hard to obtain point-wise estimates due to the lack of Sobolev constant control in our settings. Thus we need to construct a good cut off function defined only on the polygon (which really measures the
distance between any two  orbits by the toric action) and localize our  integral of curvatures:
$$
\int_X f |\nabla^k Rm|^2 ~ dg.
$$

Upon controlling this quantity,  by using M-condition and uniform curvature bound, we obtain that ( c.f. \cite{D3})
\[
(D^2 u)(x) < C,
\]
where $x$ is around the blowup point. The higher regularities follow from here. We remark that this is one of the main reasons why we have to restrict ourselves to an interior estimate at this stage. Here we adopt similar arguments  in the appendix of \cite{FH}  to give us the higher regularities control of $Rm$.\\

\item To deal with the cut off function $f$, some of the technical difficulties come from integration by parts, for example, $\int_X f ~ dg$ is not bounded. The more substantial difficulty is that we need to choose some parabolic box in the polygon in the sense of geodesic distance. Then, the issue of how the geodesic distance changes over time is crucial.  In fact, one of the main challenges in a geometric flow (Ricci flow, Calabi flow) is to compare the distance function in different time slices, for example $t=0$ and $t=-1$. Note that the evolution of metric is controlled by the Hessian of the scalar curvature which, at the present stage, is precisely what we hope to control.  \\

A key step is that we divide the geodesic segment which realizes the distance at time $t=-1$ into three sets:  the set where the concentration of
$|Rm|^2$ is large, the set where the concentration is controlled and the oscillation of metric, comparing to $t=0$, is large and the set where the oscillation is bounded and the concentration is bounded \footnote{Roughly speaking, a point contains a nontrivial concentration energy if there is certain line segment (in Euclidean sense, passing through this point) where the integration of $|Rm|^2$ is nontrivial over this line segment. }. For the first set, it is easy to show that its Lebesgue measure is controlled. For the second set, we prove that its Lebesgue measure is also controlled because the evolution of metric is controlled by the Hessian of the scalar curvature, hence by the difference of the Calabi energy at $t=-1$ and $t=0$ which is bounded. Therefore, we are able to prove that, if the distance between $x$ and $\partial P_{\epsilon_0}$ is $L$ at $t=0$, then the distance between $x$ and $\partial P_{\epsilon_0}$ is at least $cL$ at $t=-1$ for some uniform constant $c$. \\

\end{enumerate}

Finally, to prove Theorem (\ref{main}) using Theorem (\ref{key}), one needs to show that  the geodesic distance and Euclidean distance are somehow equivalent in $P_{\epsilon_0}$. This is proved in Theorem (\ref{distance}). Notice that the length of a curve is
\begin{eqnarray}
\label{Def_distance}
\int_0^{s_0} \sqrt{D^2(u)(\gamma'(s), \gamma'(s))} ~ds.
\end{eqnarray} 
The key observation is that (\ref{Def_distance}) is bounded from below because
$$
\int_P Trace(u^{ij}(t)) < C
$$
uniformly. \\

With all the results obtained so far, one can control $Rm$ and its covariant derivatives in $P_{\epsilon_0}$. Then one can get Theorem (\ref{main}) using Krylov-Safonov and Schauder's estimates.
\\

\begin{rmk} After the celebrated work of G. Perelman \cite{P1},  it has now become a powerful tool in the Ricci flow to select a local maximum of certain geometry quantity (like curvature) and to apply careful analysis in local parabolic box (c.f. \cite{CW1, CW2}). While we clearly draw inspirations from Perelman's work,  it seems still a novelty to localize estimates in the Calabi flow. There are obvious, substantial new difficulties arising because of its higher order. Some of these new difficulties are fundamentals and require thorough new investigation and invention. However, the first named author suspects that it will be a constant theme in the study of geometry flow to deal with the oscillations of the distance functions over time parameter.  \\
\end{rmk}

{\bf Acknowledgment: } The second and third named author would like to express their gratitude to Professor Paul Gauduchon and Frank Pacard for their support. The second named author would like to thank Professor Pengfei Guan and Vestislav Apostolov for stimulating discussions. The third named author would like to thank Professor Anmin Li for his support. 

\section{Notations and Setup}
Let $X$ be a K\"ahler manifold with a complex structure $J$ and a K\"ahler class $[\omega]$. There is a one to one correspondence between the sets of all K\"ahler metrics and a set of relative K\"ahler potentials $\mathcal{H}_0$, where
$$
\mathcal{H}_0 = \{\varphi \in C^\infty(M) ~ | ~ \omega + i \partial \bar{\partial} \varphi > 0 \} / \mathbb{R}.
$$
The Calabi flow equation is 
$$
\frac{\partial \varphi}{\partial t} = R_\varphi - \underline{R},
$$
where $R_\varphi$ is the scalar curvature and $\underline{R}$ is its average. The Calabi flow decreases the Calabi energy which is
$$
\int_X (R_\varphi - \underline{R})^2 ~ d \omega_\varphi^n.
$$
The evolution equation of the bisectional curvature of the Calabi flow is
$$
\frac{\partial Rm}{\partial t} = - \triangle^2 Rm + \nabla^2 Rm * Rm + \nabla Rm * \nabla Rm.
$$
Suppose $X$ is a toric manifold with a Delzant polytope $P$. Then the toric invariant K\"ahler metric is one to one corresponding to the set of symplectic potentials $u$ satisfying the Guillemin boundary conditions up to an affine function. The Calabi flow equation in the sympletic side is
$$
\frac{\partial u}{\partial t} = \underline{A} - A_u,
$$
where $A_u = - \sum_{i, j} u^{ij}_{~ij}$ by Abreu's work and $\underline{A}$ is its average. The distance between any two symplectic potentials $u_1$ and $u_2$ is
$$
\sqrt{\int_P (u_1 - u_2)^2 ~ d\mu}.
$$

In the calculations of later sections, we also adopt the following notations:
\begin{itemize}

\item $| ~ \cdot ~ |_E$ : Euclidean metric.

\item $| ~ \cdot ~ |_u$ or $| ~\cdot ~|$ : Riemannian metric.

\item $D$: Euclidean derivative.

\item $\nabla$: Covariant derivative.

\end{itemize}

At the end of this section, we would like to introduce the $M$-condition from Donaldson's paper \cite{D3}. Let $l$ be a line interval of $P$ parameterizing by $p + s \nu, s \in [-3R, 3R]$, where $\nu$ is a unit vector. We say $u$ satisfying the $M$-condition on $l$ if
$$
| (D u (p- R \nu) - D u (p + R \nu) ) \cdot \nu |_E < M.
$$
We say $u$ satisfying the $M$-condition on $P$ if for any line interval $l \subset P$, $u$ satisfies the $M$-condition on $l$.

\section{Distance Control}
Our goal in this section is to prove Theorem  (\ref{distance}). Before going into the details of the proof, we would like to explain the ideas of the proof to our readers. Our first observation is Corollary (\ref{interior control}), i.e.,
$$
\int_P Trace(u^{ij}(t)) < C,
$$
where $C$ is a positive constant independent of $t$. This is a consequence of the facts that the Calabi flow decreases the Calabi energy and the geodesic distance in the space of relative K\"ahler potentials. Our second observation is Proposition (\ref{bad line}) where we show that if $Trace(u^{ij})$ is large at one point $x$, then we can show that along any interval $l$ with fixed diameter passing through $x$ in $P_{\epsilon_0}$, $\int_l Trace(u^{ij})$ is also large, provided that the $L^2$ norm of $Rm$ on $l$ is bounded. With these two observations and the fact that the Riemann length of a curve is
$$
\int_0^{s_0} \sqrt{u_{ij}(\gamma'(s), \gamma'(s))}~ ds,
$$
we can prove Theorem (\ref{distance}).

Now we proceed to give some lemmas in order to prove Corollary (\ref{interior control}). By Theorem 1.5 of Calabi and Chen \cite{CC}, the Calabi flow decreases the distance, we obtain that
$$
\int_P (u(0)-u(t/2))^2 ~ d \mu \geq \int_P (u(t/2)-u(t))^2 ~ d\mu.
$$
It implies that there exists a constant $C > 0$ such that for any $t < T$, we have
$$
\int_P u^2(t) ~ d\mu < C.
$$
Since the Calabi energy decreases under the Calabi flow, we have for any $t < T$,
$$
\int_P A^2(t) ~ d\mu < C.
$$
Thus we obtain that for any $t < T$,
$$
\left|\int_P u(t) A(t) ~ d \mu \right| < C.
$$
Also
$$
\left|\int_P u(0) A(t) ~ d \mu \right| < C.
$$
Then we have
$$
\left|\int_P (u(t) - u(0)) A(t) ~ d \mu \right| < C.
$$
Integration by parts as Lemma 3.3.5 in \cite{D1}, we have 
$$
\left|\int_P (u(t) - u(0))_{ij} u^{ij}(t) ~ d \mu \right| < C + 2 \left| \int_{\partial P} (u(t)-u(0)) ~ d \sigma \right| .
$$
Thus we have
$$
\left|\int_P u_{ij}(0) u^{ij}(t) ~ d \mu \right| < C + 2 \left| \int_{\partial P} (u(t)-u(0)) ~ d \sigma \right| .
$$
Similar calculations show
$$
\left|\int_P u_{ij}(t) u^{ij}(0) ~ d \mu \right| < C + 2 \left| \int_{\partial P} (u(t)-u(0)) ~ d \sigma \right| .
$$
We also observe the following lemma.
\begin{lemma}
There exists a constant $C$ such that for any $t < T$, we have
$$
\left| \int_{\partial P} (u(t)-u(0)) ~ d \sigma \right| < C.
$$
\end{lemma}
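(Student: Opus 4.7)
The plan is to apply Donaldson's integration-by-parts formula (Lemma 3.3.5 of \cite{D1}) with the test function taken to be the symplectic potential $u(t)$ itself. This yields the identity
$$
\int_P u_{ij}(t) \, u^{ij}(t) \, d\mu \;=\; \int_P u(t) \, A(t) \, d\mu \;+\; 2 \int_{\partial P} u(t) \, d\sigma .
$$
Since $u^{ij}(t) u_{ij}(t) = n = 2$ pointwise (it is the trace of the identity matrix), the left-hand side is simply $2|P|$, and rearranging gives
$$
\int_{\partial P} u(t) \, d\sigma \;=\; |P| \;-\; \frac{1}{2} \int_P u(t) \, A(t) \, d\mu .
$$

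By the Cauchy--Schwarz inequality, the remaining integral is bounded by $\|u(t)\|_{L^2(P)} \|A(t)\|_{L^2(P)}$, both of which have already been controlled uniformly for $t < T$ in the preceding paragraphs of this section: the first via the distance-decreasing property of the Calabi flow, and the second via monotonicity of the Calabi energy. Hence $\int_{\partial P} u(t) \, d\sigma$ is uniformly bounded in $t$. The analogous identity at $t = 0$ shows that $\int_{\partial P} u(0) \, d\sigma$ is a fixed finite constant depending only on the initial data, and subtracting the two identities yields the claimed bound.

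The main subtlety is the legitimacy of applying the integration-by-parts identity with $f = u(t)$, since the Guillemin boundary conditions cause $u(t)$ to carry mild logarithmic singularities modeled on $\sum l_i \log l_i$ near $\partial P$. However, this singular model extends continuously (with value $0$) to $\partial P$, so $\int_{\partial P} u(t) \, d\sigma$ is absolutely convergent; the interior integrals are likewise finite by the $L^2$-bound on $u(t)$ and the pointwise identity $u^{ij} u_{ij} \equiv n$. The formula can thus be justified either by a routine approximation of $u(t)$ by a sequence of functions smooth on $\bar P$, or by carrying out the integration by parts directly in local coordinates adapted to $\partial P$ as in Donaldson's derivation. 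I expect no genuine analytical obstacle at this step, and view the content of the lemma as the observation that the single identity above reduces a boundary-integral estimate to two $L^2$-bounds already secured.
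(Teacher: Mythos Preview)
Your approach is correct and noticeably more direct than the paper's. The paper proves the lemma by a time-integration argument: writing
\[
\int_{\partial P}(u(t)-u(0))\,d\sigma \;=\; \int_0^t\int_{\partial P}(\underline{A}-A(s))\,d\sigma\,ds,
\]
it bounds the right-hand side by first controlling $\int_0^t\int_P u^{ij}A_{ij}\,d\mu\,ds$ via the Calabi-energy decay identity
\[
\int_0^t\int_P u^{ij}(s)A_{ik}(s)A_{jl}(s)u^{kl}(s)\,d\mu\,ds \;=\; \int_P A^2(0)\,d\mu - \int_P A^2(t)\,d\mu,
\]
and then converting this into a bound on $\int_0^t\int_{\partial P}A(s)\,d\sigma\,ds$ by applying Donaldson's formula with the \emph{smooth} test function $f=A(s)$.

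Your route bypasses the time integration entirely: the pointwise identity $u^{ij}u_{ij}\equiv n$ collapses Donaldson's formula with $f=u(t)$ to an algebraic relation among $\int_{\partial P}u(t)\,d\sigma$, the fixed constant $n|P|$, and $\int_P u(t)A(t)\,d\mu$, the last of which is already bounded by Cauchy--Schwarz from the estimates established just before the lemma. This is cleaner and gives a bound with no implicit dependence on $t$. The one cost is exactly the point you flag: $u(t)$ is not smooth on $\bar P$, so Lemma~3.3.5 of \cite{D1} must be extended beyond its stated hypotheses. The paper avoids this issue by only ever inserting smooth test functions (either $A$, or $u(t)-u(0)$, whose Guillemin singularities cancel). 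Your proposed justification---continuity of $u$ on $\bar P$, finiteness of each term in the identity, and a direct check that the intermediate boundary contributions in the two integrations by parts vanish (since $u^{ij}\nu_j\to 0$ on each face while $Du$ blows up only logarithmically)---is indeed routine, and the resulting argument is a genuine simplification.
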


\begin{proof}
Notice that for any $t < T$, by the calculations of section 5 in \cite{H1}, we have
$$
\int_0^t \int_P u^{ij}(s) A_{ik}(s) A_{jl}(s) u^{kl} (s) ~ d \mu ds = \int_P A^2(0) - A^2(t) ~ d \mu.
$$
Thus we obtain
$$
\left| \int_0^t \int_P u^{ij}(s) A_{ij}(s) ~ d \mu dt \right| < C.
$$
Integration by parts, we get
$$
\left| \int_0^t \int_{\partial P} A(s) ~ d \sigma dt \right| < C.
$$
Since
$$
\int_{\partial P} (u(t)-u(0)) ~ d \sigma =\int_0^t \int_{\partial P} \underline{A} - A(s) ~ d \sigma dt ,
$$
we obtain the result.
\end{proof}
As a corollary, we have
\begin{cor}
\label{interior control}
There exists a constant $C$ such that for any $t < T$, we have
$$
\left|\int_P u_{ij}(t) u^{ij}(0) ~ d \mu \right| < C, \quad \left|\int_P u_{ij}(0) u^{ij}(t) ~ d \mu \right| < C.
$$
\end{cor}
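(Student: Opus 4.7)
The plan is to observe that the corollary is essentially already assembled from the two estimates that precede its statement, and simply requires combining them. Specifically, the paper has established, by the integration-by-parts computation modeled on Lemma 3.3.5 of \cite{D1}, the two inequalities
$$
\left|\int_P u_{ij}(0) u^{ij}(t) ~ d \mu \right| < C + 2 \left| \int_{\partial P} (u(t)-u(0)) ~ d \sigma \right|
$$
and the same with the roles of $0$ and $t$ interchanged. The preceding lemma then bounds the boundary integral $\left|\int_{\partial P}(u(t)-u(0))\,d\sigma\right|$ by a constant independent of $t$. Substituting this bound into both inequalities immediately yields the two desired estimates, possibly with a new constant $C$.

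Concretely, I would just write: by the two displayed inequalities derived above and the preceding lemma, there is a constant $C'$ (depending only on the initial data and the $L^2$ bound on the Calabi energy) such that both quantities are bounded by $C'$; relabel $C'$ as $C$. No further calculation is needed.

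There is no genuine obstacle at this step: the hard work has already been done in the integration-by-parts argument (which required carefully separating interior and boundary contributions using the Guillemin boundary conditions and the symplectic-potential framework) and in the lemma controlling the boundary integral (which in turn relied on the monotonicity of the Calabi energy and the integrated identity $\int_0^t\!\int_P u^{ij} A_{ik} A_{jl} u^{kl}\,d\mu\,ds = \int_P(A^2(0)-A^2(t))\,d\mu$ from \cite{H1}). The corollary is essentially a packaging statement, isolating the inequality $\bigl|\int_P u_{ij}(t) u^{ij}(0)\,d\mu\bigr| < C$ in the form that will be used later to derive $\int_P \operatorname{Trace}(u^{ij}(t))\,d\mu < C$ by choosing appropriate reference potential $u(0)$ (e.g.\ one with $u_{ij}(0)$ comparable to the identity in the interior), which is what is actually needed for the distance control argument outlined at the start of the section.
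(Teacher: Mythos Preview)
Your proposal is correct and matches the paper's approach exactly: the corollary is stated without proof in the paper precisely because it is an immediate consequence of the two displayed integration-by-parts inequalities together with the preceding lemma bounding $\left|\int_{\partial P}(u(t)-u(0))\,d\sigma\right|$. Your explanation of how these pieces fit together is accurate.
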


To continue, we prove the following lemma first which is an extension of Lemma 4 in \cite{D3}.
\begin{lemma}
\label{Upper bound of Hessian}
Suppose $l$ is a line interval inside $P$ such that $\int_{l} |Rm|^2$ is bounded by a constant $C$. We parametrize $l = \{p+s \nu ~ : ~ -3R \leq s \leq 3R\}, $ where $p \in l$ is the middle point of $l$ and $\nu$ is a unit vector. Suppose $u$ satisfies the $M$-condition on $l$, then 
\begin{itemize}
\item $R>1$, we have
$$
u_{ij}(0)\nu^i\nu^j \leq e^{\frac{M-C}{2}}.
$$

\item $R \leq 1$,  we have

$$
u_{ij}(0) \nu^i\nu^j \leq  \frac{e^{M/2} -1}{CR}.
$$
\end{itemize}
\end{lemma}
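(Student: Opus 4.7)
Set $f(s):=u_{ij}(p+s\nu)\nu^{i}\nu^{j}$ and $\phi(s):=\log f(s)$ on $[-3R,3R]$; strict convexity of $u$ makes $f>0$, so $\phi$ is smooth. The $M$-condition is the integrated statement
\[
\int_{-R}^{R}f(s)\,ds \;=\;(Du(p+R\nu)-Du(p-R\nu))\cdot\nu\;\in\;(0,M),
\]
an $L^{1}$-bound on $e^{\phi}$ over $[-R,R]$; the goal is to convert this into a pointwise bound at $s=0$, reducing the lemma to a one-variable ODE problem on $l$.

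The crucial technical input is a pointwise differential inequality of the schematic form
\[
|\phi''(s)|\;\le\;c\,|Rm|(p+s\nu),\qquad s\in[-R,R],
\]
obtained by restricting the toric K\"ahler metric $g=u_{ij}\,dx^{i}dx^{j}+u^{ij}\,d\theta_{i}d\theta_{j}$ to the 2-plane $\{\partial_{\nu},J\partial_{\nu}\}$ over $l$: the resulting sectional curvature expression involves $f$, $\phi'$, $\phi''$ and the transverse components $u_{\nu\tau},u_{\tau\tau}$, and the last two are absorbed into $|Rm|_{g}$ rather than into $f$. This is the pointwise content of Donaldson's Lemma 4 in \cite{D3}, there applied under the uniform hypothesis $|Rm|\le K$; in the present setting only $\int_{l}|Rm|^{2}\le C$ is available, but Cauchy--Schwarz gives
\[
\int_{-R}^{R}|\phi''(s)|\,ds\;\le\;c\sqrt{2RC}.
\]

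With $L^{1}$-bounds on both $e^{\phi}$ and $\phi''$ in hand, the pointwise bound on $\phi(0)$ follows by a length-scale balance. On any subinterval $[-r,r]\subset[-R,R]$, $\phi$ is a concave envelope plus an error of size $\int_{-r}^{r}|\phi''|$, so Jensen's inequality yields $\phi(0)\le\log\!\bigl(\tfrac{1}{2r}\!\int_{-r}^{r}e^{\phi}\bigr)+\mathrm{error}(r)\le\log(M/(2r))+\mathrm{error}(r)$. The two regimes of the lemma correspond to two choices of $r$. When $R>1$ I take $r\sim 1$, so that $\log(M/2r)$ absorbs into an exponential $e^{M/2}$-type factor independent of $R$ and the error reduces to a function of $C$ alone, yielding the first stated form. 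When $R\le1$ I take $r=R$, so that the leading Jensen term scales like $1/R$ and the elementary estimate $\log(1+x)\le x$ converts $\log(M/(2R))$ into the displayed $(e^{M/2}-1)/R$-type factor, with the curvature contribution entering as the multiplicative $C$-correction.

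The main obstacle is the pointwise curvature identity in the second paragraph: the Riemann tensor of a toric K\"ahler surface couples $u_{\nu\nu}$ with the transverse components $u_{\nu\tau},u_{\tau\tau}$, and the required inequality demands that those transverse contributions be controlled by $|Rm|$ itself rather than by $f$. Once this algebraic identity is extracted, the remaining steps -- Cauchy--Schwarz to pass from the $L^{2}$-bound on curvature to the $L^{1}$-bound on $\phi''$, Jensen on the concave envelope of $\phi$, and the two choices of integration length-scale $r$ -- are all one-variable calculus.
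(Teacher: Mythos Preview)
Your overall architecture is right --- the $M$-condition gives an $L^{1}$ bound on $H(s):=u_{ij}(p+s\nu)\nu^{i}\nu^{j}$ along $l$, and a curvature-driven second-order differential inequality should prevent $H$ from spiking at $s=0$ --- but the specific inequality you invoke is not the one that is actually available, and your Jensen step does not survive the correction.

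You claim $|\phi''|\le c\,|Rm|$ for $\phi=\log H$ and attribute this to Donaldson's Lemma~4. What that lemma (and the present paper) actually yields is the \emph{one-sided} bound
\[
\frac{d^{2}}{ds^{2}}\Bigl(\frac{1}{H(s)}\Bigr)\;\le\;|Rm|(s),
\]
i.e.\ an upper bound on $G''$ for $G=1/H$, not a two-sided bound on $(\log H)''$. The two are inequivalent: $(\log H)'' = (H'/H)^{2} - H\cdot(1/H)''$, so even a two-sided bound on $(1/H)''$ only controls $(\log H)''$ modulo terms involving the unknowns $H$ and $H'$. Any route through $\log H$ would first require a bound on $H$ itself, which is exactly what you are trying to prove --- this is the circularity. (The transverse components $u_{\nu\tau},u_{\tau\tau}$ are not the obstruction; the issue is the $(\phi')^{2}$ term intrinsic to the $\nu$-direction.)

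With only $G''\le |Rm|$ in hand, your concave-envelope/Jensen mechanism breaks down: you need a two-sided bound on $\phi''$ to sandwich $\phi$ between its secant line and a controlled error. The paper instead integrates $G''\le|Rm|$ twice to get $G(\pm s)\le \epsilon_{0}\pm\epsilon_{1}s + Cs^{3/2}$ (with $\epsilon_{0}=G(0)$, $\epsilon_{1}=G'(0)$, and Cauchy--Schwarz giving the $s^{3/2}$), and then uses the symmetrization
\[
H(s)+H(-s)\;\ge\;\frac{1}{\epsilon_{0}+\epsilon_{1}s+Cs^{3/2}}+\frac{1}{\epsilon_{0}-\epsilon_{1}s+Cs^{3/2}}\;\ge\;\frac{2}{\epsilon_{0}+Cs^{3/2}}
\]
to eliminate the unknown slope $\epsilon_{1}$. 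Feeding this into $M\ge\int_{-R}^{R}H$ and integrating $\int_{0}^{R}\frac{ds}{\epsilon_{0}+Cs^{3/2}}$ (splitting at $s=1$ when $R>1$, using $s^{3/2}\le s$ when $R\le 1$) gives the two stated bounds on $\epsilon_{0}^{-1}=H(0)$. The symmetrization step is the replacement for your Jensen step, and it is what makes the argument close with only the one-sided curvature inequality.
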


\begin{proof}
We can suppose that $\nu$ is the unit vector in the $x_1$ direction and $p$ is the origin point. Let $H(s)=u_{11}(s,0).$
We apply the definition of the $M$-condition to obtain $$\int^R_{-R} H(s) ds \leq M.$$ Since
$$\frac{d^2}{ds^2}H(s)^{-1} \leq |Rm|(s),$$ 
for any $s_0 > 0$, we obtain
$$
\left(\frac{1}{H} \right)'(s_0) - \left(\frac{1}{H} \right)'(0) \leq \int_0^{s_0} |Rm|(s) ~ds \leq C \sqrt{s_0}.
$$
Suppose $H(0)^{-1}=\epsilon_0, \left(\frac{1}{H} \right)'(0) = \epsilon_1.$ Then 
\begin{eqnarray*}
H(s_0)^{-1} - H(0)^{-1} &=& \int_0^{s_0} \left(\frac{1}{H} \right)'(s) ~ ds \\
&\leq& \int_0^{s_0} C\sqrt{s} + \epsilon_1 ~ ds\\
&=& \epsilon_1 s_0 + C (s_0)^{3/2}.
\end{eqnarray*}
Thus $$H(s)+H(-s) \geq \frac{1}{\epsilon_0+ \epsilon_1 s +C s^{3/2}} + \frac{1}{\epsilon_0 - \epsilon_1 s +C s^{3/2}}
\geq \frac{2}{\epsilon_0 +C s^{3/2}}.$$ 

This gives 
\begin{eqnarray*}
M \geq \int_{-R}^R H(s) ~ ds \geq 2 \int^R_0 \frac{ds}{\epsilon_0 +C s^{3/2}}\\
\end{eqnarray*} 
If $R > 1$, then we have
$$
M \geq 2 \int^1_0 \frac{ds}{\epsilon_0 +C s} + C = C + 2\ln(\epsilon_0 + C) - 2 \ln \epsilon_0,
$$
which implies
$$
\epsilon_0 \geq e^{\frac{C-M}{2}}.
$$

If $R \leq 1$, then we have
$$
M \geq 2 \int^R_0 \frac{ds}{\epsilon_0 +C s}=  2 \ln \frac{C R + \epsilon_0}{\epsilon_0},
$$
which implies
$$
\epsilon_0 \geq \frac{CR}{e^{M/2} -1}.
$$
Thus we obtain the results.
\end{proof}

Now let $x$ be the axis along $l$ and $z$ be another axis. We have the following observation due to Donaldson \cite{D3}:
\begin{lemma}
\label{coordinate}
$$
|u^{zz}_{~xx}|(s) \leq |Rm|(s) u^{zz}(s) u_{xx}(s)
$$
and
$$
|u^{xx}_{~xx}|(s) \leq 2 |Rm|(s) u^{xx}(s) u_{xx}(s).
$$
\end{lemma}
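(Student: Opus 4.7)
The strategy is to identify the quantities $u^{zz}_{,xx}$ and $u^{xx}_{,xx}$ with (a constant multiple of) specific components of the Riemann curvature tensor of the toric K\"ahler metric, and then to apply the pointwise tensor-norm bound $|R(X,Y,Z,W)| \leq |Rm|\cdot|X|\,|Y|\,|Z|\,|W|$.

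On the open torus orbit, in symplectic-angle coordinates $(x^1,x^2,\theta^1,\theta^2)$ the K\"ahler metric is block-diagonal $g = \sum u_{ij}\,dx^i dx^j + \sum u^{ij}\,d\theta^i d\theta^j$, with all coefficients depending only on $x$. In particular $|\partial_{x^a}|_g^2 = u_{aa}$, $|\partial_{\theta^\alpha}|_g^2 = u^{\alpha\alpha}$, and the $x$- and $\theta$-blocks are $g$-orthogonal. Toric invariance (vanishing of every $\theta$-derivative of a metric coefficient) reduces the non-zero Christoffel symbols to
\[
\Gamma^a_{bc} = \tfrac12 u^{ad}u_{bcd}, \quad \Gamma^a_{\beta\gamma} = -\tfrac12 u^{ab}(u^{\beta\gamma})_{,b}, \quad \Gamma^\gamma_{a\beta} = \tfrac12 u_{\gamma\delta}(u^{\beta\delta})_{,a}.
\]
Plugging these into the Riemann formula for $R(\partial_x,\partial_{\theta^\alpha},\partial_x,\partial_{\theta^\alpha})$, the leading contribution comes from $\partial_x^2$ acting on $\Gamma^a_{\theta^\alpha\theta^\alpha}$ and yields a multiple of $(u^{\alpha\alpha})_{,xx}$. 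The remaining Christoffel-product terms combine, via the identity $(u^{\beta\gamma})_{,c} = -u^{\beta p}u^{\gamma q}u_{pq,c}$, so as to leave
\[
|R(\partial_x,\partial_{\theta^z},\partial_x,\partial_{\theta^z})| = |u^{zz}_{,xx}|, \qquad |R(\partial_x,\partial_{\theta^x},\partial_x,\partial_{\theta^x})| = \tfrac12|u^{xx}_{,xx}|,
\]
the factor of $2$ discrepancy reflecting an additional non-cancelling cross term present only when the $\theta$-index coincides with the direction of differentiation.

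Applying the tensor-norm inequality with $X=Z=\partial_x$ and $Y=W=\partial_{\theta^z}$ (respectively $\partial_{\theta^x}$) and inserting $|\partial_x|^2 = u_{xx}$, $|\partial_{\theta^\alpha}|^2 = u^{\alpha\alpha}$ immediately gives the two stated inequalities. The main technical obstacle is the algebraic verification in the middle step: a priori the Christoffel-product contributions are of the same order as the quantity we wish to bound, and the proof depends critically on organizing them so that the identity $(u^{\beta\gamma})_{,c} = -u^{\beta p}u^{\gamma q}u_{pq,c}$ makes them combine with the leading $\partial_x^2$ term to produce the clean identifications above, with no leftover first-derivative-squared terms.
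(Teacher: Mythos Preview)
Your central identification fails: the equality
\[
\bigl|R(\partial_x,\partial_{\theta^z},\partial_x,\partial_{\theta^z})\bigr| \;=\; \bigl|u^{zz}_{,xx}\bigr|
\]
is \emph{not} true in general. The Christoffel-product terms do not cancel against the leading term. A direct computation at a point where $u_{ij}=\delta_{ij}$, all fourth derivatives vanish, and the only nonzero third derivative is $u_{112}=1$ gives $(u^{22})_{,11}=2$ while $R(\partial_{x^1},\partial_{\theta^2},\partial_{\theta^2},\partial_{x^1})=-\tfrac{3}{4}$. So the tensor-norm bound applied to this sectional component yields only $\tfrac34\le|Rm|\,u_{xx}u^{zz}$, which says nothing about $|u^{zz}_{,xx}|=2$. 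The sentence in your last paragraph (``with no leftover first-derivative-squared terms'') is precisely the point that does not go through.

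The paper's argument bypasses this entirely. Fixing the point $s$, one applies the shear $v(x,z)=u(x+az,z)$ with $a$ chosen so that $v_{xz}=0$ there. The key algebraic observations are that $u_{xx}$, $u^{zz}$, and $u^{zz}_{,xx}$ are all \emph{invariant} under this shear, so it suffices to prove the inequality for $v$. At a point where the Hessian of $v$ is diagonal one has the exact formula
\[
|Rm|^2 \;=\; \sum_{i,j,k,l} \bigl(v^{ij}_{~kl}\bigr)^2\,\frac{v^{kk}v^{ll}}{v_{ii}v_{jj}},
\]
and the single term $(i,j,k,l)=(z,z,x,x)$ of this sum already gives $\bigl(v^{zz}_{~xx}\,v_{zz}\,v^{xx}\bigr)^2\le|Rm|^2$, which is the first inequality. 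For the second inequality, $u^{xx}$ is \emph{not} shear-invariant: one has $u^{xx}=v^{xx}+2av^{xz}+a^2v^{zz}$, hence $u^{xx}_{~xx}=v^{xx}_{~xx}+2av^{xz}_{~xx}+a^2v^{zz}_{~xx}$, and one bounds each piece separately in diagonal coordinates, the cross term producing the factor of $2$.

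In short, the mechanism is not a clean curvature-component identification but an affine normalisation that makes the formula for $|Rm|^2$ split as a sum of squares, from which the desired term can be extracted.
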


\begin{proof}
Let $s$ be the origin point. Set 
$$v(x, z) = u(x + a z, z),$$ where $a$ is some constant to be determined. Then
\begin{eqnarray*}
(D^2 v) (x, z) = 
\left(
\begin{array}{cc}
1 & 0 \\
a & 1
\end{array}
\right)
(D^2 u) (x+ a z, z)
\left(
\begin{array}{cc}
1 & a \\
0 & 1
\end{array}
\right).
\end{eqnarray*}
It shows that
\begin{eqnarray*}
(D^2 u)^{-1} (x+az, z) = 
\left(
\begin{array}{cc}
1 & a \\
0 & 1
\end{array}
\right)
(D^2 v)^{-1} (x,z)
\left(
\begin{array}{cc}
1 & 0 \\
a & 1
\end{array}
\right).
\end{eqnarray*}
It means that $u_{xx} (x+az, z) = v_{xx}(x,z)$ and $u^{zz}(x+az, x) = v^{zz}(x,z)$. Thus we have
$$
u^{zz}_{~xx} (x+az, z) = v^{zz}_{~xx} (x,z).
$$
Since $v_{xz}(x,z) = a u_{xx}(x+az, z) + u_{xz}(x+az, z)$, we can choose an appropriate constant $a$ such that $v_{xz}(0, 0) = 0$. Thus calculations in Lemma 4.3 of \cite{H1} show that
\begin{eqnarray*}
& & |v^{zz}_{~xx}(0, 0) v_{zz}(0, 0) v^{xx}(0, 0)| \\
&\leq& \sqrt{\sum_{i,j,k,l} (v^{ij}_{~kl})^2(0,0) \frac{v^{kk}(0,0) v^{ll}(0,0)}{v_{ii}(0,0) v_{jj}(0,0)}}\\
& = & \sqrt{\sum_{i,j,k,l} v^{ij}_{~kl}(0,0) v^{kl}_{~ij}(0,0)} \\
& = & |Rm|(0, 0).
\end{eqnarray*}
Thus
$$
|u^{zz}_{~xx}(0, 0)| = |v^{zz}_{~xx}(0, 0)| \leq u^{zz}(0, 0) u_{xx}(0, 0) |Rm|(0, 0).
$$
Notice that
$$
u^{xx}(x+az, z) = v^{xx}(x,z) + 2a v^{xz}(x,z) + a^2 v^{zz}(x,z).
$$
Thus
$$
u^{xx}_{~xx}(x+az,z)=v^{xx}_{~xx}(x,z) + 2a v^{xz}_{~xx}(x,z) + a^2 v^{zz}_{~xx}(x,z).
$$
Notice that
$$
|v^{xx}_{~xx}|(0, 0) \leq |Rm|(0, 0)
$$
and
$$
|v^{xz}_{~xx}|(0, 0) \leq \sqrt{v^{zz}(0, 0) v_{xx}(0, 0)}|Rm|(0, 0).
$$
Thus we obtain
\begin{eqnarray*}
& &|u^{xx}_{~xx}|(0, 0)\\
&\leq& |Rm|(0, 0) ( 1 + 2a \sqrt{v^{zz}(0, 0) v_{xx}(0, 0)} + a^2  v^{zz}(0, 0) v_{xx}(0, 0))\\
&\leq& 2 |Rm|(0, 0) (1 +  a^2  v^{zz}(0, 0) v_{xx}(0, 0))\\
&\leq& 2 |Rm|(0, 0) (v^{xx}(0, 0)+  a^2  v^{zz}(0, 0)) v_{xx}(0, 0)\\
&\leq& 2 |Rm|(0, 0) u^{xx}(0, 0) u_{xx}(0, 0).
\end{eqnarray*}
\end{proof}

\begin{lemma}
\label{comparison}
Let $l$ be a line interval in $P$ parameterizing by $l =p + s \nu, s \in [-3R, 3R] $ and $\nu$ is a unit vector parallel to the $x$-axis. Suppose $\int_{-3R}^{3R} |Rm|^2(s) ~ ds < C$ and $u$ satisfies the $M$-condition on $l$. Then there exists $s_0 > 0$ depending on $M, C$ such that one of the following case occurs:
\begin{itemize}
\item For any $s \in [0,s_0]$, $$ u^{zz} (s) \geq u^{zz} (0) / 2.$$
\item For any $s \in [-s_0,0]$, $$u^{zz} (s) \geq u^{zz} (0) / 2.$$
\end{itemize}
Also we have either 
for any $s \in [0,s_0]$, $$ u^{xx} (s) \geq u^{xx} (0)/ 2.$$
or for any $s \in [-s_0,0]$, $$u^{xx} (s) \geq u^{xx} (0) / 2.$$
\end{lemma}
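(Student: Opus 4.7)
The plan is to use the pointwise inequalities of Lemma \ref{coordinate}, $|(u^{zz})''(s)|\leq |Rm|(s)\,u^{zz}(s)\,u_{xx}(s)$ and $|(u^{xx})''(s)|\leq 2|Rm|(s)\,u^{xx}(s)\,u_{xx}(s)$, as second-order Gronwall-type differential inequalities along $l$ and to show that they force $u^{zz}$ (resp.\ $u^{xx}$) to remain within a factor of $2$ of its value at $0$ on a sufficiently short one-sided interval. Place $p$ at the origin with $\nu$ in the positive $x$-direction and set $\phi(s)=u^{zz}(s,0)$; by the reflection $s\mapsto -s$ we may assume $\phi'(0)\geq 0$ and aim for the conclusion on $[0,s_0]$.

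A standard bootstrap on the maximal sub-interval where $\phi(s)\in[\phi(0)/2,2\phi(0)]$, using $|\phi''|\leq|Rm|\phi\,u_{xx}$ and $\phi'(0)\geq 0$, yields after two integrations
\[
\phi(s)\;\geq\;(\phi(0)+\phi'(0)s)\Big(1-2s\int_0^s|Rm|(\tau)u_{xx}(\tau)\,d\tau\Big)\;\geq\;\phi(0)\Big(1-2s\int_0^s|Rm|u_{xx}\Big),
\]
so it suffices to find $s_0=s_0(M,C)$ with $s\int_0^s|Rm|u_{xx}\leq 1/4$ on $[0,s_0]$. Cauchy-Schwarz together with the $M$-condition $\int_{-R}^R u_{xx}\leq M$ gives
\[
\int_0^{s_0}|Rm|u_{xx}\;\leq\;\sqrt{C}\,\Big(\int_0^{s_0}u_{xx}^2\Big)^{1/2}\;\leq\;\sqrt{C\,M\,\sup_{[0,s_0]}u_{xx}},
\]
so the whole problem reduces to a uniform upper bound on $\sup_{[0,s_0]}u_{xx}$ in terms of $M$ and $C$ alone.

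To produce that bound, start from Lemma \ref{Upper bound of Hessian}, which provides $u_{xx}(0)\leq e^{(M-C)/2}=:K_0$ for $R>1$, and combine it with the near-concavity $(1/u_{xx})''\leq |Rm|$: the function $\tilde h(s)=1/u_{xx}(s)-\int_0^s(s-\tau)|Rm|(\tau)\,d\tau$ is concave on $l$ with $\tilde h(0)\geq 1/K_0$. Either $\tilde h$ remains at least $1/(2K_0)$ on a neighborhood of $0$ whose size depends only on $M$ and $C$, or its first zero $s_\ast$ is itself bounded below in terms of $M$ and $C$: if $s_\ast$ were too small, the spike of $u_{xx}=1/h$ forced near $s_\ast$ by $h(s_\ast)\leq s_\ast^{3/2}\sqrt{C}$ would contribute more than $M$ to $\int u_{xx}$, contradicting the $M$-condition. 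Either way, $u_{xx}\leq 2K_0$ on $[0,s_0(M,C)]$.

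The analogous statement for $u^{xx}(s,0)$ follows from the identical argument applied to the second inequality of Lemma \ref{coordinate}, the extra factor of $2$ being absorbed into the choice of $s_0$. The principal obstacle in this plan is precisely this last auxiliary step: the one-sided ODI $(1/u_{xx})''\leq |Rm|$ on its own does not prevent $u_{xx}$ from developing sharp pointwise spikes, so the $M$-condition has to be used quantitatively to rule out such spikes on intervals shorter than $s_0(M,C)$.
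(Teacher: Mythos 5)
Your overall strategy (a second--order differential inequality from Lemma \ref{coordinate}, closed by the $M$-condition and the $L^2$ curvature bound) is the same in spirit as the paper's, but the step you yourself flag as ``the principal obstacle'' is a genuine gap, and your sketch for it does not work. The $M$-condition only bounds $\int u_{xx}$ along the segment, so a thin spike of $u_{xx}$ of enormous height but tiny width is \emph{not} excluded by the naive count ``the spike would contribute more than $M$''; your concavity function $\tilde h$ and its first zero $s_\ast$ give a large value of $u_{xx}$ at one point, which by itself contradicts nothing. What rules spikes out is the two-sided, symmetric estimate already contained in the proof of Lemma \ref{Upper bound of Hessian}: from $(1/H)''\le |Rm|$ one gets $H(s)+H(-s)\ge 2/(\epsilon_0+Cs^{3/2})$ with $\epsilon_0=1/H(0)$, whose integral diverges logarithmically as $\epsilon_0\to 0$, so a spike forces $\int H>M$. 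Consequently the fix is not a new argument at all: every $s\in[-R,R]$ is the midpoint of a sub-interval of $l$ of definite length on which the hypotheses (the $M$-condition and $\int|Rm|^2<C$) still hold, so Lemma \ref{Upper bound of Hessian} applied at each such re-centered point gives the uniform bound $\sup_{[-R,R]}u_{xx}\le C(M,C,R)$ directly. This is exactly what the paper does before anything else, and it is the input your reduction needs.

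There is a second, smaller gap in your Gronwall step: the bootstrap on the window $\phi\in[\phi(0)/2,\,2\phi(0)]$ only protects the lower bound as long as $\phi$ has not exited through the \emph{top}. If $\phi'(0)$ is large, $\phi$ can exceed $2\phi(0)$ well before $s_0$, after which your bound $|\phi''|\le 2\phi(0)|Rm|u_{xx}$ is lost and nothing prevents a subsequent crash below $\phi(0)/2$; one would have to iterate the window upward (using that $\phi'\ge 0$ at each first crossing) to close the argument. The paper sidesteps this entirely by setting $f=\ln u^{zz}$, so that $|f''+f'^2|\le C|Rm|$ (using the pointwise bound on $u_{xx}$), dividing by $1+f'^2$ and integrating to get $\arctan f'(s)\ge \arctan f'(0)-C\sqrt{s}$; this yields the one-sided bound $f'(s)\ge -\tan(C\sqrt{s})$ no matter how large $f'$ becomes, hence $f(s)\ge f(0)-Cs^{3/2}$ and $u^{zz}(s)\ge u^{zz}(0)/2$ on $[0,s_0]$ without any window hypothesis. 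I would recommend you adopt both devices: the re-centered application of Lemma \ref{Upper bound of Hessian} for the $u_{xx}$ bound, and the logarithm/arctan substitution in place of the two-sided bootstrap.
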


\begin{proof}
By Lemma (\ref{Upper bound of Hessian}), we know that $u_{xx}(s), s \in [-R, R]$ is bounded by a constant $C$. Thus we have for $s \in [-R, R]$, 
$$
-C |Rm|(s) u^{zz}(s) \leq u^{zz}_{~xx}(s) \leq C |Rm|(s) u^{zz}(s).
$$
Let $f(s) = \ln u^{zz}(s)$. We have
$$
f''(s) + f'^2(s) = \frac{u^{zz}_{~xx}}{u^{zz}}(s).
$$
Thus for $s \in [-R, R]$, we obtain
$$
|f'' + f'^2|  (s) \leq C |Rm|(s).
$$
Dividing both sides by $1+f'^2$, we obtain
$$
\left| \frac{f''}{1+f'^2} \right| \leq 1 + C |Rm|(s).
$$
Integrating both sides, we have
$$
|\arctan (f'(s)) - \arctan (f'(0))| \leq C \sqrt{|s|}
$$
Without loss of generality, we can assume $f'(0) \geq 0$. Then for $s \in [0, R]$, we have
$$
\arctan(f'(s)) \geq - C \sqrt{s}
$$
which is
$$
f'(s) \geq \tan (-C \sqrt{s}).
$$
Thus by choose $s_0$ appropriately, we conclude that for any $s \in [0, s_0]$,
$$
f(s) \geq f(0) - C s^{3/2}
$$
and 
$$
u^{zz} (s) \geq u^{zz}(0) / 2.
$$
Similar calculations show 
$$
u^{xx} (s) \geq u^{xx}(0) / 2
$$
for $s \in [0, s_0]$.
\end{proof}

Our discussions lead to the following proposition.
\begin{prop}
\label{bad line}
Suppose there exists a unit vector $\nu$ such that $$(D^2 u)(0)(\nu, \nu) < \epsilon,$$ then
$$
\int_{-R}^R trace(u^{ij}(s)) ~ ds \geq \frac{C}{\epsilon},
$$
where $C$ only depends on $M, R$ and $\int_{-R}^R |Rm|^2(s) ~ ds$.
\end{prop}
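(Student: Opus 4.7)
The plan is to reduce the proposition to Lemma (comparison) by choosing the line $l$ to be \emph{perpendicular} to the small direction of $D^2 u(0)$, so that the inverse Hessian $u^{ij}$ has a large entry transverse to $l$, and then propagate this largeness along $l$.

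First I would diagonalize: in an orthonormal frame $\{e_1,e_2\}$ of eigenvectors of $D^2u(0)$ with eigenvalues $\lambda_1 \leq \lambda_2$, the hypothesis gives $\lambda_1 \leq u_{ij}(0)\nu^i\nu^j < \epsilon$, hence $u^{ij}(0)$ has the eigenvalue $1/\lambda_1 > 1/\epsilon$ with eigenvector $e_1$. I then choose the line $l$ through the origin in the direction $e_2$ (implicitly assuming, as in the hypotheses of Lemma (comparison), that $u$ satisfies the $M$-condition on $l$ and $\int_{-3R}^{3R}|Rm|^2\,ds$ is bounded, since the final constant is allowed to depend on these). In the notation of Lemma (comparison), $e_2$ plays the role of the $x$-axis and $e_1$ plays the role of the $z$-axis, so that $u^{zz}(0) = 1/\lambda_1 > 1/\epsilon$.

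Next I would apply Lemma (comparison) to this line. It produces $s_0 > 0$ depending only on $M$ and the curvature integral such that either $u^{zz}(s) \geq u^{zz}(0)/2 > 1/(2\epsilon)$ on $[0,s_0]$, or the analogous bound holds on $[-s_0,0]$. Using the pointwise inequality $\mathrm{trace}(u^{ij})(s) = u^{xx}(s) + u^{zz}(s) \geq u^{zz}(s)$ (both eigenvalues of $u^{ij}$ being positive), one integrates over the good half-interval of length $\min(s_0,R)$ to obtain
$$
\int_{-R}^{R} \mathrm{trace}(u^{ij})(s)\,ds \;\geq\; \int_{0}^{\min(s_0,R)} u^{zz}(s)\,ds \;\geq\; \frac{\min(s_0,R)}{2\epsilon},
$$
which gives the desired bound $C/\epsilon$ with $C = \min(s_0,R)/2$ depending only on $M$, $R$, and $\int_{-R}^{R} |Rm|^2 ds$.

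The only substantive step is Step 2 (application of Lemma (comparison)); everything else is linear-algebra bookkeeping. The conceptual point I want to stress is the switch of roles: the direction along which $u_{ij}$ is \emph{small} (so that $u^{ij}$ is \emph{large}) is transverse to the line along which we integrate. This is dictated by Lemma (comparison), which controls $u^{zz}$ (the transverse direction) rather than $u^{xx}$ (the line direction). There is no serious obstacle; the one thing to verify carefully is that rotating coordinates to align $e_2$ with the $x$-axis of the lemma does not change the intrinsic quantities (the trace of $u^{ij}$, the $M$-condition, and $\int |Rm|^2$), which is immediate since all of these are coordinate-invariant or preserved by Euclidean rotations.
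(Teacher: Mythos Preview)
Your argument has a genuine gap: you are not free to choose the line $l$. In the statement, the parametrization $s \in [-R,R]$ and the dependence of $C$ on $\int_{-R}^{R}|Rm|^2(s)\,ds$ refer to a \emph{given} line, and the application in the proof of Theorem~\ref{distance} makes this explicit: there the line is the vertical line $x = x_0$ (a ``good line'' with controlled curvature integral), while the direction $\nu$ in which $D^2u$ is small is the tangent $c'(s)$ to an unrelated curve. Your rotation aligning $e_2$ with the $x$-axis does not merely relabel coordinates; it replaces the given line by the line through the origin in the $e_2$-direction. On that new line you have no a priori bound on $\int|Rm|^2$, so the hypotheses of Lemma~\ref{comparison} are not available, and in any case the conclusion would concern the wrong integral.

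The fix is short and is exactly what the paper does. Keep the given line $l$ with its direction as the $x$-axis and let $z$ be the orthogonal axis. From $(D^2u)(0)(\nu,\nu)<\epsilon$ you get, as you observed, that the smallest eigenvalue of $D^2u(0)$ is $<\epsilon$, hence $\mathrm{trace}(u^{ij})(0)>1/\epsilon$. Since the trace is basis-invariant, in the \emph{given} $(x,z)$-coordinates one has $u^{xx}(0)+u^{zz}(0)>1/\epsilon$, so at least one of $u^{xx}(0)$, $u^{zz}(0)$ exceeds $1/(2\epsilon)$. Lemma~\ref{comparison} propagates \emph{both} $u^{xx}$ and $u^{zz}$ along the given $x$-direction, so whichever is large stays at least half as large on a half-interval of length $\min(s_0,R)$, and your final integration goes through unchanged. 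The key point you missed is that no diagonalization is needed: trace invariance already puts the large entry into the fixed $(x,z)$-frame.
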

\begin{proof}
Since $(D^2 u)(0)(\nu, \nu) < \epsilon$, we conclude that the smallest eigenvalue of $(D^2 u)(0)$ must be less than $\epsilon$. Thus $Tr ((D^2 u)^{-1})(0) > 1/ \epsilon$. Without loss of generality, we can assume that $u^{zz}(0) > 1/ \epsilon$. Then our previous discussions show that there exists $s_0 > 0$ such that for all $s \in [0, s_0]$ (or for all $s \in [-s_0, 0]$),
$$
u^{zz} (s) \geq \frac{u^{zz}(0)}{2}.
$$

Then 
\begin{eqnarray*}
& &\int_{-R}^R trace(u^{ij}(s)) ~ ds \\
&\geq& \int_0^R u^{zz}(s) ~ ds \\
&\geq& \int_0^{\min \{s_0,R\}} \frac{1}{2 \epsilon } ~ ds\\
&=& \frac{\min \{s_0,R\}}{2 \epsilon}.
\end{eqnarray*}
Thus our conclusion holds.
\end{proof}

Now we give a proof of Theorem (\ref{distance}).
\begin{proof}[Proof of Theorem (\ref{distance}).]

Recall that for any $t < T$ we have 
$$
\int_P (u(t))^2 ~ d\mu < C.
$$
Then it is easy to see that there exists a constant $M > 0$ such that for any $t < T, x \in P_{\epsilon_0/2}$, we have
$$
|u(t, x)| < M, ~ |D u(t,x)| < M.
$$
It shows that for any $t < T$ and any line interval $l \subset P_{\epsilon_0/2}$, $u(t)$ satisfies the $M$-condition on $l$.

For any $t < T$, let $c$ be a geodesic realizing the minimum distance between $\partial P_{\epsilon_0}$ and $\partial P_{2\epsilon_0}$. Denote $\epsilon$ to be the geodesic length of $c$. We want to show that $\epsilon$ cannot be too small.

The first case is that $c$ is far away from the vertex. To simply our notations, we can assume the boundary of $P, P_{\epsilon_0}, \partial P_{2\epsilon_0}$ are $x=0, x=\epsilon_0, x=2\epsilon_0$ respectively. We will also suppress $t$ in the following calculations. We let $x, y$ be two axises. Let $C_2$ be a constant such that 
$$
\int_P |Rm|^2 ~ dx < C_2.
$$
Also let
$$
\mathcal{G} = \{ x_0 \in [\epsilon_0, 2\epsilon_0] ~ | ~ \int_{x=x_0} |Rm|^2 < \frac{2C_2}{\epsilon_0} \}.
$$
Then the Lebesgue measure of $\mathcal{G}$ is greater than $\epsilon_0/2$. So $\mathcal{G}$ represents the ``good lines". Let the curve $c$ be parametrized by its Euclidean length: 
$$
c = \{ (x(s), y(s)) ~ | ~ s \in [0, s_0] ~ \}
$$ 
It is easy to see that $s_0 \geq \epsilon_0$. Since 
$$
\int_0^{s_0} \sqrt{(D^2u) (c'(s), c'(s))}  ~ ds= \epsilon, 
$$
we conclude that the Lebesgue measure of the following set
$$
\mathcal{B} = \{ x_0 \in \mathcal{G} ~ | ~ \exists s \in [0, s_0], ~ x(c(s)) = x_0,~ (D^2u)(c'(s), c'(s)) \leq \frac{16 \epsilon^2}{\epsilon_0^2}~  \}
$$
is greater than $\epsilon_0 / 4$. In fact, $\mathcal{B}$ are the bad points in $\mathcal{G} \cap c$. 

On one hand, after choosing $R$ appropriately, Proposition (\ref{bad line}) tells us that
\begin{eqnarray*}
& & \int_{P_{\frac{\epsilon_0}{2}}} Tr (u^{ij}(x)) ~ d\mu \\
&\geq& \int_{x_0 \in \mathcal{B}} \int_{x=x_0} Tr (u^{ij}) ~ d\mu\\
&\geq& \frac{\epsilon_0}{4} \frac{C  \epsilon^2_0}{16\epsilon^2}\\
&=& \frac{C}{\epsilon^2},
\end{eqnarray*}
where $C$ depending only on $\epsilon_0, M, R $ and $\int_P |Rm|^2~ d\mu$.

On the other hand, from Corollary (\ref{interior control}), we know that there exists a constant $C_1 > 0$ such that for any $t < T$, we have
$$
\int_{P_{\epsilon_0}} Tr(u^{ij}(t, x)) ~ d\mu < C_1.
$$
Thus we conclude that $\epsilon$ is bounded from below uniformly.

The remaining case we should consider is that the curve $c$ is close to a vertex. Then without loss of generality, we can let the boundary of $P$ to be $x=0 ~\& ~y=0$ and the boundary of $P_{\epsilon_0}$ to be $x=\epsilon_0 ~\& ~y=\epsilon_0$. Without loss of generality, we can assume that half of the curve $c$, in the sense of Euclidean distance, will touch the set
$$
\{(x,y) \in P_{\epsilon_0} \backslash P_{2\epsilon_0} ~ | ~ x \in [\epsilon, 2\epsilon]~\}.
$$
Then our previous arguments provide the lower bound of $\epsilon$.
\end{proof}

\section{Curvature Control}
In this section, we will use the blow-up analysis to prove Theorem (\ref{key}) which is the key step in the proof of Theorem (\ref{main}).

Suppose that the conclusion is not true, then we can find a sequence of points $(x_i, t_i), x_i \in P_{\epsilon_0}, t_i < T$ such that 
$$
Q(t_i, x_i) d_{u(t_i)}^2(x_i, \partial P_{\epsilon_0}) = \max_{t \leq t_i,~ x \in P_{\epsilon_0}} Q(t, x) d_{u(t)}^2(x, \partial P_{\epsilon_0})
$$
and
$$
\lim_{i \rightarrow \infty} Q(t_i, x_i) d_{g(t_i)}^2(x_i, \partial P_{\epsilon_0}) = \infty.
$$
Let $\lambda_i = Q(t_i, x_i)$, we define a sequence of the Calabi flow $u_i(t), t \leq 0$ as follows:
$$
u_i(t, x) = \lambda_i u\left(\frac{t+t_i}{\lambda_i^2}, \frac{x+x_i}{\lambda_i} \right).
$$ 
Let
$$
Q_i(t,x) =  (|Rm| + |\nabla Rm|^{2/3} + |\nabla^2 Rm|^{1/2})_{u_i} (t,x),
$$
then $Q_i(0,0)=1$. We denote $P^{(i)} = \lambda_i P, P^{(i)}_{\epsilon_0}$ to be $\lambda_i P_{\epsilon_0}$. The following lemma shows that we can pick a backward parabolic neighborhood in the symplectic side for sufficiently large $i$. Note that the Calabi energy and the $M$-condition are scaling invariant.

\begin{prop}
For sufficiently large $i$ and for any $t \in [-1,0]$, we have
$$
d_{u_i(t)} (0, \partial P^{(i)}_{\epsilon_0}) \geq c d_{u_i(0)} (0, \partial P^{(i)}_{\epsilon_0})
$$ 
for some uniform constant $c$.
\end{prop}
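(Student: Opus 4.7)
The plan is to argue by contradiction via a continuity argument combined with the three-set decomposition of a minimizing geodesic that the introduction advertises. Write $D_i = d_{u_i(0)}(0, \partial P^{(i)}_{\epsilon_0})$. The maximality of $(t_i, x_i)$ becomes, after the parabolic rescaling by $\lambda_i = Q(t_i, x_i)$, the pointwise inequality
$$
Q_i(t, x) \, d_{u_i(t)}^2(x, \partial P^{(i)}_{\epsilon_0}) \leq D_i^2 \qquad \text{for all } t \leq 0,\ x \in P^{(i)}_{\epsilon_0},
$$
and the failure hypothesis of the blow-up forces $D_i \to \infty$. The Calabi energy, the $M$-condition, and the Calabi-energy decay identity $\int_{t^*}^{0} \int |\nabla^2 R|^2 \, d\mu \, dt \leq \mathrm{Cal}(t^*) - \mathrm{Cal}(0)$ are all scale invariant, so they transfer uniformly to the sequence $u_i$. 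I would then fix a small universal $c \in (0,1)$ and set $t^*_i \in [-1, 0]$ to be the infimum of times such that $d_{u_i(s)}(0, \partial P^{(i)}_{\epsilon_0}) \geq c D_i$ on $[t^*_i, 0]$; the claim reduces to showing $t^*_i = -1$. Otherwise continuity gives $d_{u_i(t^*_i)}(0, \partial P^{(i)}_{\epsilon_0}) = c D_i$, and on the set $\{(s, x) : s \in [t^*_i, 0],\ d_{u_i(s)}(x, 0) \leq c D_i/2\}$ the triangle inequality combined with the display above yields the uniform curvature bound $Q_i(s, x) \leq 4/c^2$, hence a uniform pointwise bound on $|\nabla^2 Rm|$ in that large parabolic neighborhood of $(0, 0)$.

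Next I would take a $u_i(t^*_i)$-minimizing geodesic $\gamma : [0, cD_i] \to P^{(i)}_{\epsilon_0}$ from $0$ to $\partial P^{(i)}_{\epsilon_0}$, parametrized by $u_i(t^*_i)$-arclength, so that $L_{u_i(t^*_i)}(\gamma) = c D_i$ while $L_{u_i(0)}(\gamma) \geq D_i$. Following the strategy in the introduction, the key step is to decompose the parameter interval into $S_1 \cup S_2 \cup S_3$. The set $S_1$ collects parameters $s$ where the concentration of $|Rm|^2$ along a Euclidean transversal through $\gamma(s)$ is large; its one-dimensional Lebesgue measure is controlled by Fubini and the scale-invariant bound $\int_{P^{(i)}} |Rm|^2 \, d\mu \leq C_0$, which in this K\"ahler-surface setting follows from the Calabi-energy bound together with Chern--Gauss--Bonnet. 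The set $S_2$ collects parameters $s \notin S_1$ where the metric ratio $g_{u_i(0)}(\gamma', \gamma')/g_{u_i(t^*_i)}(\gamma', \gamma')$ is large; its measure is controlled by the space-time integral $\int_{t^*_i}^0 \int_{\text{tube}(\gamma)} |\nabla^2 R|^2 \, d\mu \, dt$, which is bounded by the Calabi-energy drop. The remaining good set $S_3$ is where both the transversal concentration and the metric oscillation are controlled; on $S_3$ the $M$-condition combined with Lemma \ref{Upper bound of Hessian} forces the pointwise bound $\sqrt{g_{u_i(0)}(\gamma', \gamma')} \leq C_3$.

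Summing the three contributions to $L_{u_i(0)}(\gamma) = \int_0^{cD_i}\sqrt{g_{u_i(0)}(\gamma', \gamma')}\, ds$ yields
$$
D_i \leq L_{u_i(0)}(\gamma) \leq O(1) + C_3 \cdot c D_i,
$$
so choosing $c < 1/(2C_3)$ gives $D_i \leq O(1) + D_i/2$, contradicting $D_i \to \infty$; hence $t^*_i = -1$, which is the assertion of the proposition with this universal $c$. The main obstacle will be the $S_2$ estimate: converting the $L^2$ space-time bound on $\nabla^2 R$ into a one-dimensional Lebesgue bound along the curve $\gamma$ requires a Fubini argument on a Euclidean tube around $\gamma$, and the width of such a tube is precisely what the $M$-condition and Lemma \ref{Upper bound of Hessian} are needed to control, as $\gamma$ is a $u_i(t^*_i)$-geodesic rather than a Euclidean line segment.
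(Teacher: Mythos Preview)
Your high-level strategy matches the paper's: take a minimizing geodesic $\gamma$ at the earlier time, compare its lengths in the two metrics via the three-set decomposition, and exploit the scale-invariant bounds on the Calabi energy, $\int|Rm|^2$, and the $M$-condition. Your continuity device (locating a first bad time $t^*_i$) is not in the paper---there the authors simply argue at $t=-1$ and observe the same proof works verbatim for any $t\in[-1,0]$---but it is harmless, and the contradiction form of your final inequality is logically equivalent to the paper's direct lower bound $L_{u_i(-1)}(\gamma)\ge cL$.

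There is, however, a genuine gap in the accounting of your final inequality, and some of the ingredients are misattributed. In your $u_i(t^*_i)$-arclength parameter the contribution $\int_{S_1\cup S_2}\sqrt{g_{u_i(0)}(\gamma',\gamma')}\,ds$ is \emph{not} $O(1)$: on $S_2$ the integrand is large by definition, and the bad sets only have bounded \emph{Euclidean} measure (via Fubini), which does not convert to bounded $s$-measure without a lower Hessian bound at $t^*_i$ that you do not have. The paper's resolution is to work in the Euclidean parameter along $\gamma$ and to restrict to the portion of $\gamma$ within $u_i(0)$-distance $D_i/2$ of the origin; there the maximality gives $Q_i(0,\cdot)\le4$ pointwise, and Lemma~\ref{Upper bound of Hessian} applied along short Euclidean segments in the $\gamma$-direction yields $D^2u^{(0)}\le C I$. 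This pointwise Hessian bound at $t=0$ is what makes the bad-set contribution to the $u_i(0)$-length $O(1)$, while the $S_3$ bound comes purely from the definition of $S_2$ (bounded ratio), not from Lemma~\ref{Upper bound of Hessian} as you wrote. Separately, to convert the 2D Lebesgue bound on the oscillation set into a 1D bound along $\gamma$, the paper uses Lemma~\ref{comparison} (not Lemma~\ref{Upper bound of Hessian}) to propagate the bad inverse-Hessian ratio from a point of $\gamma$ to a transversal segment of definite length; this is precisely why $S_1$ is removed first, so that the transversal $L^2$-curvature hypothesis of Lemma~\ref{comparison} is available. Finally, you do not treat the case that $\gamma$ is not a Euclidean line: the paper devotes roughly half of the proof to this, projecting onto the coordinate axes and proving an auxiliary claim that excising any small-Euclidean-measure set of $x$- or $y$-coordinates cannot kill a definite fraction of the length.
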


\begin{proof}
Without loss of generality, we can just prove for the case $t=-1$. We suppress $i$ in the following calculations. Notice that 
$$
\int_P A^2(-1) ~ d \mu - \int_P A^2(0) ~ d \mu = \int_{-1}^0 \int_P u^{ij}(t) A_{ik}(t) A_{jl}(t) u^{kl}(t) ~ d \mu d t.
$$
Notice that we can assume 
$$
\int_P A^2(-1)  ~ d \mu - \int_P A^2(0) ~ d \mu < 1.
$$
Thus
$$
\int_{-1}^0 \int_P u^{ij}(t) A_{ik}(t) A_{jl}(t) u^{kl}(t) ~ d \mu d t < 1.
$$
For any unit vector $\nu$, we write $u_{11}$ and $A_{11}$ as the second derivative of $u$ and $A$ in the direction of $\nu$. We have
\begin{eqnarray*}
& &|\log u_{11}(0, x) - \log u_{11}(-1,x)| \\
&=& |\int_{-1}^0 \frac{A_{11}}{u_{11}} ~ dt |\\
&\leq& \sqrt{\int_{-1}^0 \frac{A^2_{11}}{u^2_{11}} ~ dt} \\
&\leq& \sqrt{\int_{-1}^0 u^{ij}(t) A_{ik}(t) A_{jl}(t) u^{kl}(t) ~ dt}.
\end{eqnarray*}
In the last inequality, we use the calculation method in Lemma (\ref{coordinate}). We conclude that the Lebesgue measure of the set 
$$
\{ x \in P ~|~\exists \rm{~unit ~vector~} \nu, ~s.t.~ |\log (u_{ij} \nu^i \nu^j)(0, x) - \log (u_{ij} \nu^i \nu^j)(-1,x)| > 1 \}
$$
is less than 1. 

Let $L$ be the geodesic distance between $(0,0)$ and $\partial P_{\epsilon_0}$ at $t=0$. Suppose $\gamma$ is the curve realizing the minimum distance between $(0,0)$ and $\partial P_{\epsilon_0}$ at $t=-1$. Then the geodesic length of $\gamma$ at $t=0$ is greater than $L$. Since $u(t)$ satisfies the $M$-condition, we conclude that the Euclidean distance between the end points of $\gamma$ is greater than $\frac{(\sqrt{2}-1)^2 L^2}{M}$.

The first case is that $\gamma$ is a straight line. We can assume that $\gamma$ lies in the $x$-axis. Notice that $\int_P |Rm|^2 ~ d\mu < C_0.$ Then the Lebesgue measure of the following set
$$
\mathcal{B} = \{ x_0 \in \gamma ~|~ \int_{x=x_0} |Rm|^2 ~ dy > C_0 \}
$$
is less than $1$.

Notice that for any $x \in P$ whose geodesic distance to $(0,0)$ at $t=0$ is less than $L/2$, we have $Q(0,x) \leq 4$. Let $u_{11}$ be the second derivative of $u$ along the $x$-axis. Since $u(t)$ satisfies the $M$-condition, we conclude that $u_{11}(0, x) < C$. Let $(x_0, 0)$ be the point in $\gamma$ such that the geodesic distance between $(0,0)$ and $(x_0,0)$ is $L/2$ at $t=0$. Thus we conclude that
$$
\int_{(x,0) \in \mathcal{B}, ~ x \leq x_0} \sqrt{u_{11}(x)} ~ dx < C.
$$
Since $L  \gg 1$, after removing $\mathcal{B}$ from $\gamma$, the geodesic length of $\gamma$ is still greater than $L/3$ at $t=0$. Our goal is to prove that the geodesic length of $\gamma \backslash \mathcal{B}$ is greater than $c L$ at $t=-1$ for some uniform constant $c$. Thus without loss of generality, we can assume that $\mathcal{B}$ is an empty set.

Now set 
\begin{eqnarray*}
\tilde{\mathcal{B}} &=& \gamma \backslash \{ x \in [0, x_0] ~|~ (D^2 u)(0, (x,0)) \leq 100 (D^2 u)(-1, (x,0)) \} \\
&=& \{ x \in [0, x_0] ~|~ \exists ~ \nu ~s.t. ~ (u^{ij})(0, (x,0))(\nu, \nu) < \frac{1}{100}(u^{ij})(-1, (x,0))(\nu, \nu) \}.
\end{eqnarray*}
We want to show that the Lebesgue measure of $\tilde{\mathcal{B}}$ is well controlled. Once we know this, we can argue that 
$$
\int_{x \in [0, x_0], ~(x,0) \in \tilde{\mathcal{B}}} \sqrt{u_{11}} (0, (x,0)) ~ dx < C.
$$ 
Thus
$$
\int_{x \in [0, x_0], ~(x,0) \notin \tilde{\mathcal{B}}} \sqrt{u_{11}} (0, (x,0)) ~ dx > L/4.
$$ 
Hence
$$
\int_{x \in [0, x_0], ~(x,0) \notin \tilde{\mathcal{B}}} \sqrt{u_{11}} (-1, (x,0)) ~ dx > L/40.
$$
Next we prove that the Lebesgue measure of $\tilde{\mathcal{B}}$ is well controlled. Notice that for every point $x \in \tilde{\mathcal{B}}$, we obtain that there exists a unit vector $\nu$ such that
$$
(u^{ij})_{t=0} (x,0) (\nu, \nu) < \frac{1}{100} (u^{ij})_{t=-1}(x,0) (\nu, \nu).
$$
Applying Lemma (\ref{comparison}), we conclude that there exists a uniform constant $y_0$ such that for each point $y \in [0,y_0]$
$$
(u^{ij})_{t=-1}(x,y) (\nu, \nu) \geq \frac{1}{2} (u^{ij})_{t=-1}(x,0) (\nu, \nu) \geq  50 (u^{ij})_{t=0}(x,0) (\nu, \nu).
$$
Since $u(t)$ satisfies the $M$-condition, the geodesic distance is controlled by the Euclidean distance. Applying Lemma 7 of \cite{D3}, we can choose $y_0$ appropriately such that for any $y \in [0, y_0]$
$$
2 (u^{ij})_{t=0}(x,0) (\nu, \nu) \geq (u^{ij})_{t=0}(x,y) (\nu, \nu).
$$
Thus
$$
 (u^{ij})_{t=-1}(x,y) (\nu, \nu) \geq 25 (u^{ij})_{t=0}(x,y) (\nu, \nu).
$$
Then there is a unit vector $\tilde{\nu}$ such that
$$
\ln (u_{ij})_{t=-1}(x,y) (\tilde{\nu}, \tilde{\nu}) \leq \ln (u_{ij})_{t=0}(x,y) (\tilde{\nu}, \tilde{\nu}) - \ln 25.
$$

It shows that
$$
|\ln (u_{ij})_{t=-1}(x,y) (\tilde{\nu}, \tilde{\nu}) - \ln (u_{ij})_{t=0}(x,y) (\tilde{\nu}, \tilde{\nu})| \geq \ln 25.
$$

Thus the Lebesgue measure of $\tilde{\mathcal{B}} \times [0, y_0]$ is well controlled.

The remaining case is that $\gamma$ is not a straight line.  Let $x$-axis and $y$-axis be perpendicular to each other. The following calculations will be done at $t=0$. Let $p_0$ be the first point in $\gamma$ such that the geodesic distance between $(0,0)$ and $p_0$ is $L/2$. We replace $\gamma$ by the points in $\gamma$ connecting $(0,0)$ and $p_0$. Let $\mathcal{B}_x, \mathcal{B}_y$ be any union of disjoint open intervals in the $x$-axis and $y$-axis respectively. We assume that the Lebesgue measure of $\mathcal{B}_x$ and $\mathcal{B}_y$ are less than $\epsilon$ which is a positive constant to be determined later. Notice that the Riemann length of $\gamma$, i.e., $\bar{L}$, is greater than $\frac{L}{2}$ . We want to show that one of the following is true:
\begin{itemize}
\item For any $\mathcal{B}_x$, after removing the points of $\gamma$ whose $x$-coordinate lies in $\mathcal{B}_x$, the length of $\gamma$ is larger than $c_1 L$.

\item For any $\mathcal{B}_y$, after removing the points of $\gamma$ whose $y$-coordinate lies in $\mathcal{B}_y$, the length of $\gamma$ is larger than $c_1 L$.
\end{itemize}
The above constant $c_1$ is to be determined. Suppose not, then we conclude that the sum of length of intervals in $\gamma$ whose $x$-coordinate in $\mathcal{B}_x$ and $y$-coordinate in $\mathcal{B}_y$ is greater than $\bar{L}- 2 c_1 L$. We want to show that this would lead to a contraction for some $\epsilon$ and $c_1$. Notice that $(D^2 u) < C_0 I_2$ for some uniform constant $C_0$. We have the following observation.

\begin{claim}
Let $\gamma_0 \subset \gamma$ be a curve such that the geodesic distance between the ends point of $\gamma_0$ is $20 \sqrt{C_0}$. Let $|\gamma|_u$ be the geodesic length of $\gamma$. Then there exists $C_2$ depending on $M, \epsilon$ such that
$$
|\gamma_0 \backslash \mathcal{B}_x \times \mathcal{B}_y |_u \geq C_2 20 \sqrt{C_0}.
$$
\end{claim}
Let us assume that our claim holds. Then we can pick a successive sequence of points $p_i \in \gamma, i=1, \ldots, N$ such that for any $i=1, \ldots, N-1$, the geodesic distance between $p_i$ and $p_{i+1}$ is $20 \sqrt{C_0}$. Moreover, the geodesic distance between $p_1$ and $p_N$ is greater than $L/3$. Let $\gamma_{1N}$ be the curve in $\gamma$ connecting $p_1$ and $p_N$. Then our previous claim shows that 
$$
|\gamma_{1N} \backslash \mathcal{B}_x \times \mathcal{B}_y |_u \geq C_2 L/3.
$$
Notice that $|\gamma \backslash \mathcal{B}_x \times \mathcal{B}_y |_u < 2c_1 L$. We derive a contradiction by setting $6c_1 < C_2$. 

Now we turn to give a proof of our previous claim. Without loss of generality, we assume that the two end points of $\gamma_0$ are $(0,0)$ and $p_0$. We can further assume that for any point $p \in \gamma_0$, the geodesic distance between $(0,0)$ and $p$ is less than or equal to $20\sqrt{C_0}$. 

The simple case is that $\langle 1,0 \rangle$ and $\langle 0,1 \rangle$ are the eigenvectors of $(D^2 u)(0,0)$. By Lemma 8 of \cite{D3}, we know that $B_u((0,0),20\sqrt{C_0})$ is almost an ellipsoid. Moreover, for any point $x \in B_u((0,0),20\sqrt{C_0})$, we have a uniform constant $C_3$ such that
$$
\frac{1}{C_3} (D^2 u)(0,0) \leq (D^2 u)(x) \leq C_3 (D^2 u)(0,0).
$$
Let $(x_0, 0)$ be at the boundary of $B_u((0,0),20\sqrt{C_0})$. Notice that $x_0 \geq 20$. Without loss of generality, we can assume that the $x$-coordinate of $p_0$ is greater than $C _4 x_0$ for some uniform constant $C_4$. We have the following inequalities:
\begin{eqnarray*}
|\gamma_0 |_u \geq \frac{1}{\sqrt{C_3}} \sqrt{u_{11}}(0,0) (C_4 x_0 - \epsilon) \geq C_2 20 \sqrt{C_0}.
\end{eqnarray*}
The more complicated case is that we need to rotate the coordinate systems to get $x'$-axis and $y'$-axis so that the $\langle 1,0 \rangle$ and $\langle 0,1 \rangle$ axis are eigenvector of $(D^2 u)(0,0)$. Notice that we can always find new $\mathcal{B}'_{x'}$ and $\mathcal{B}'_{y'}$ with Lebesgue measure less than $2 \epsilon$ such that 
$$
\mathcal{B}_x \times \mathcal{B}_y \subset \mathcal{B}'_{x'} \times \mathcal{B}'_{y'}.
$$
Then our earlier arguments apply. So without loss of generality, we have that for any $\mathcal{B}_x$ with Lebesgue measure less than $\epsilon$, after removing the set of points in $\gamma$ whose $x$-coordinate is in $\mathcal{B}_x$, the length of $\gamma$ is still greater than $c_1 L$. Then we can apply the arguments in the case where $\gamma$ is a straight line to obtain the conclusion as follows: 

\begin{itemize}
\item By choosing $C_5 > 0$ appropriately, the Lebesgue measure of the following set in $x$-axis is less than $\epsilon/2$ :
$$
\mathcal{B}_1 = \{ x_0~ | ~\exists~ y_0 ~ s.t.~ (x_0, y_0) \in \gamma~ \&~ \int_{x=x_0} |Rm|^2(x_0, y) ~ dy \geq C_5 \}.
$$

\item Let $\gamma_0$ be the sets of points in $\gamma$ whose geodesic distance to $(0,0)$ at $t=0$ is less than $L/2$. By choosing $C_6 > 0$ appropriately, the Lebesgue measure of the following set in $x$-axis is less than $\epsilon/2$ :
\begin{eqnarray*}
\mathcal{B}_2 =& \{ & x_0 \notin \mathcal{B}_1~ | ~\exists~ y_0 ~ s.t.~ (x_0, y_0) \in \gamma_0~  \mbox{and}~\\
& & \exists~ \nu~ s.t.~ (u^{ij})_{t=0}(x_0, y_0)(\nu, \nu) < C_6 (u^{ij})_{t=-1}(x_0, y_0)(\nu, \nu)  \}.
\end{eqnarray*}
\end{itemize}

Then after removing $(\mathcal{B}_1 \cup \mathcal{B}_2) \times y$ from $\gamma$, we know that the length of $\gamma$ at $t=0$ is greater than $c_1 L$ for some uniform constant $c_1 > 0$, then the length of $\gamma$ at $t=-1$ is also greater than $c L$ for some uniform constant $c$.
\end{proof}

\subsection{Evolution equation of the Calabi flow with a cut-off function}
By the previous discussions, we can conclude that 
\begin{itemize}

\item For any point $(t, p) \in [-1,0] \times P_{\epsilon_0}$,  
$$Q(t,p) \times d^2_t (p, \partial  P_{\epsilon_0}) \leq L.$$

\item For any $t \in [-1, 0], d_t ((0,0),  \partial  P_{\epsilon_0}) \geq c \sqrt{L}.$

\item $d_0 ((0,0),  \partial  P_{\epsilon_0}) = \sqrt{L}.$

\end{itemize}
Thus without loss of generality, we can assume that $|Rm|(t, x) \leq 4$ in $B_{u_t}((0,0), 2)$ and $|Rm|(0,0) = 1$ at $t=0$.

Now let us consider a $C^2$ cutoff function $\psi  : [0,2] \rightarrow [0,1]$ such that
\begin{itemize}
\item $$\psi(t)=1,\  0 \leq t \leq 1/2$$
\item $$\psi(t)=0,\  t \geq 1$$
\item $$|\psi'(t)| \leq C \psi(t)^{\frac{a-1}{a}}$$
\item $$|\psi''(t)| \leq C \psi(t)^{\frac{a-1}{a}}$$ where $ a \in \mathbb{Z}^+$ to be determined.
\end{itemize}

Let $r_t (\cdot) = d_t (\cdot, (0,0))$ be the Riemannian distance function in the totally geodesic submanifold $P$ with induced metric $$g^{(t)}=u^{(t)}_{ij} d x_i d x_j.$$ It is naturally to extend $r$ to be a function in the whole manifold such that it is invariant under the $\mathbb{T}^2$ action. In this case, $r$ is the distance function to a submanifold $\mathbb{T}^2$ and we define $f_t = \psi(r_t)$ to be a cutoff function. We suppress $t$ in the following calculations.

It is easy to see that away from \{the cut locus of $(0,0) \subset P$ \} $\times \mathbb{T}^2$, $$|\nabla f| = |\psi'| |\nabla r| \leq C \psi(t)^{\frac{a-1}{a}}$$ and $$\triangle f = \psi'' |\nabla r|^2 + \psi' \triangle r.$$ Since the curvature is bounded, we can show that $|\triangle r|$ is also bounded for $0 \leq r \leq 1$ by comparison geometry.

First we are trying to argue that $\triangle r$ is bounded by an universal constant $C$ away from \{the cut locus of $(0,0)$ \} $\times \mathbb{T}^2$. Notice that we only need to pay attention to the case where $1/2 \leq r \leq 1$.

To get the upper bound of $\triangle r$, we apply the Weitzenb\"ock formula, i.e.

$$
|\mbox{Hess} \ r|^2 + \frac{\partial}{\partial r}(\triangle r) + \mbox{Ric}(\frac{\partial}{\partial r}, \frac{\partial}{\partial r}) = 0
$$

Let $\lambda_1, \ldots, \lambda_{4}$ be the eigenvalues of Hess $r$. Then
$$
|\mbox{Hess} \ r|^2 = \lambda_1^2 + \cdots + \lambda_n^2 \geq \frac{(\lambda_1 + \cdots + \lambda_4)^2}{4} = \frac{(\mbox{tr}(\mbox{Hess}\ r))^2}{4}=\frac{(\triangle r)^2}{4}
$$

Thus,

$$
\frac{(\triangle r)^2}{4} + \frac{\partial}{\partial r} (\triangle r) + C \leq 0
$$

Let $\phi=\frac{4}{\triangle r}$. Then
\begin{eqnarray}
\label{WF}
\frac{\phi'}{1 + C \phi^2} \geq 1,
\end{eqnarray}
where $C < 0$ is an universal constant.

Suppose we are evaluating $\triangle r$ at the point $x$ and $\gamma$ is the unique minimizing geodesic connecting $p=(0,0), x \in P$. It is also easy to see that $\gamma$ is the minimizing geodesic in the whole manifold. Taking $x_1, x_2, \ldots$ approaching $p$ along $\gamma$, and rescaling the metric by $\frac{1}{r(x_i)}$, we getting a sequence of manifold converging to standard $\mathbb{R}^4$ in Cheeger-Gromov sense. Using the geodesic spherical coordinate in $P$, we can write the metric in the whole manifold (locally) as
$$
g = r \otimes r + h_{ij} d \theta_i \otimes d \theta_j + g^{ij} d \eta_i \otimes d \eta_j
$$

Hence in the limiting process, $r$ is always fixed in the coordinate system. Since the metric converges in $C^{\infty}$, we conclude that $r(x_i) \triangle r(x_i)$ converges to 1. Thus $\phi(x_i) \sim 4 r(x_i)$, plugging into inequality (\ref{WF}) and integrating it gives

$$
\triangle r \leq 4 \sqrt{-C} \coth( \sqrt{-C} r)
$$

For the lower bound of $\triangle r$, if $\triangle r(x)$ is very negative, then $\phi$ is very close to 0, hence $1 + C \phi^2 > C_1 > 0$. Let $\gamma(s)$ be a geodesic connecting $\gamma(0) = p$ and $\gamma(s_0) = x$ where $s$ is the arc-length parameter of $\gamma$. Then $\phi(s_0+s)$ approaches to 0 as $s>0$ increases. Inequality (\ref{WF}) tells us $\phi$ will reach to 0 for a small $s > 0$. Thus we obtain a contradiction if $s_0$ is not close to a conjugate point in $\gamma$.

Now we have the following lemma:

\begin{lemma} If $|Rm| \leq 4$ in $B_u (p, 2)$, then we can construct a cutoff function $f$ such that
$$|\nabla f| \leq C f^{\frac{a-1}{a}}, \quad |\triangle f| \leq C f^{\frac{a-1}{a}},$$
where $C$ is a universal constant.
\end{lemma}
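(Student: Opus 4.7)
The plan is to set $f := \psi(r)$ where $r(x) = d_u(p, x)$ is the Riemannian distance to $p$ in the totally geodesic polygon $P$, extended $\mathbb{T}^2$-invariantly to the toric manifold, and $\psi$ is the one-variable cutoff already fixed just above the lemma (supported in $[0,1]$, identically $1$ on $[0, 1/2]$, and satisfying $|\psi'|, |\psi''| \leq C \psi^{(a-1)/a}$). The gradient bound comes essentially for free: wherever $r$ is smooth, $|\nabla r| = 1$, so
$$|\nabla f| = |\psi'(r)| |\nabla r| \leq C \psi(r)^{(a-1)/a} = C f^{(a-1)/a}.$$

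For the Laplacian, the chain rule gives $\triangle f = \psi''(r) |\nabla r|^2 + \psi'(r) \triangle r$, so the whole problem reduces to a uniform bound on $|\triangle r|$ on the annulus $\{1/2 \leq r \leq 1\}$ where $\psi'$ and $\psi''$ are supported (on $\{r \leq 1/2\}$ the cutoff is constant and on $\{r \geq 1\}$ it vanishes). The discussion preceding the lemma already supplies both halves of this bound: the Weitzenb\"ock identity combined with the trace inequality $|\mathrm{Hess}\, r|^2 \geq (\triangle r)^2/4$ yields a Riccati-type differential inequality in the radial parameter, which integrates (using the Cheeger--Gromov initial behaviour $r \triangle r \to 3$ near $p$, which holds since the manifold has real dimension four) to the upper bound $\triangle r \leq C \coth(Cr)$; the reverse side uses the same ODE to rule out $\triangle r \to -\infty$ before the first conjugate point. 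I would simply quote those two bounds and combine them with the pointwise hypotheses on $\psi$ to conclude $|\triangle f| \leq C f^{(a-1)/a}$ on the smooth locus.

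The main technical obstacle is the cut locus of $p$, where $r$ is only Lipschitz and the chain-rule identity for $\triangle f$ is not literally valid. The standard comparison-geometry remedy is to interpret the upper and lower bounds on $\triangle r$ as one-sided barriers coming from either side of a cut point, so that $|\triangle f| \leq C f^{(a-1)/a}$ holds pointwise almost everywhere and distributionally on all of $P$. Since the cutoff $f$ will subsequently be used only inside integration-by-parts arguments, this almost-everywhere control of $|\triangle f|$ is exactly what is needed. All constants come out universal because the hypothesis $|Rm| \leq 4$ is uniform on the geodesic ball $B_u(p, 2)$, so every comparison constant depends only on this curvature bound and on the fixed integer $a$. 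The hardest input is thus the curvature comparison establishing $|\triangle r| \leq C$ on the annulus $\{1/2 \le r \le 1\}$, which the authors have already laid out in the preceding paragraphs; the lemma itself is then a packaging statement.
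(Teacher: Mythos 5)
Your proposal is correct and follows essentially the same route as the paper: set $f=\psi(r)$ with the already-fixed cutoff $\psi$, get the gradient bound from $|\nabla r|=1$, reduce the Laplacian bound to a uniform bound on $|\triangle r|$ on the annulus $\{1/2\le r\le 1\}$ via the Weitzenb\"ock/Riccati comparison sketched just before the lemma (with the lower bound coming from ruling out $\triangle r\to-\infty$ before a conjugate point), and treat the cut locus by the standard barrier/distributional interpretation, which is all that the subsequent integration-by-parts arguments require. One small correction: in this construction $r$ is the distance to the $\mathbb{T}^2$-orbit over $(0,0)$ (a codimension-two submanifold), not to a point of the four-manifold, so the correct initial asymptotic is $r\,\triangle r\to 1$ rather than $3$; this changes only the constant in the resulting $\coth$-type bound and does not affect the conclusion.
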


Next, we are going to derive a sequence of integral inequalities which will be useful in calculating the evolution equation of the Calabi flow with a cut-off function. We will do the calculations in the complex side. Please keep in mind that our cut-off function only define in the polygon and we do not have any control in the torus direction. For example, the integral of $f$ is not necessary bounded. In Proposition (\ref{e-inequality3}), we bypass this difficulty by using that the integral of $f |Rm|^2$ is bounded. To simplify the notations, we write $\int_X f ~ dg$ as $\int f$.

\begin{lemma}
\label{e-inequality2}

For every $k \in \mathbb{Z}^+$, there is $a(k) \in \mathbb{Z}^+,  0 < b(k) < 1 $ such that for all $a > a(k), 1 >  b > b(k)$,

$$
\int f^b |\nabla^k Rm|^2 \leq \epsilon \int f |\nabla^{k+1} Rm|^2 + C(k,\epsilon)
$$
\end{lemma}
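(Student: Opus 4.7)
The plan is to argue by induction on $k$, reducing $\int f^{b}|\nabla^{k}Rm|^{2}$ to $\int f|\nabla^{k+1}Rm|^{2}$ plus tails of the form $\int f^{c}|\nabla^{k-1}Rm|^{2}$ that fall under the inductive hypothesis. I would start from
\begin{equation*}
\int f^{b}|\nabla^{k}Rm|^{2} = \int f^{b}\langle \nabla(\nabla^{k-1}Rm),\nabla(\nabla^{k-1}Rm)\rangle
\end{equation*}
and integrate by parts once to obtain
\begin{equation*}
\int f^{b}|\nabla^{k}Rm|^{2} = -b\int f^{b-1}\nabla f\cdot\nabla^{k}Rm\cdot\nabla^{k-1}Rm - \int f^{b}\,\Delta(\nabla^{k-1}Rm)\cdot\nabla^{k-1}Rm.
\end{equation*}
The pointwise bound $|\Delta(\nabla^{k-1}Rm)|\leq \sqrt{n}\,|\nabla^{k+1}Rm|$, which follows from Cauchy--Schwarz on the trace, reduces the problem to controlling only $|\nabla^{k+1}Rm|$, $|\nabla^{k}Rm|$, and $|\nabla^{k-1}Rm|$, so no commutator terms $[\Delta,\nabla^{k-1}]Rm$ need be expanded explicitly.

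Next I apply Cauchy--Schwarz with carefully tuned exponent splittings. For the Laplacian term, writing $f^{b}=f^{1/2}\cdot f^{b-1/2}$ gives
\begin{equation*}
\int f^{b}|\nabla^{k+1}Rm|\,|\nabla^{k-1}Rm| \leq \epsilon\int f|\nabla^{k+1}Rm|^{2} + C(\epsilon)\int f^{2b-1}|\nabla^{k-1}Rm|^{2}.
\end{equation*}
For the $\nabla f$ term, substituting $|\nabla f|\leq Cf^{(a-1)/a}$ and splitting $f^{b-1/a}=f^{b/2}\cdot f^{b/2-1/a}$ gives
\begin{equation*}
C\int f^{b-1/a}|\nabla^{k}Rm|\,|\nabla^{k-1}Rm| \leq \epsilon\int f^{b}|\nabla^{k}Rm|^{2} + C(\epsilon)\int f^{b-2/a}|\nabla^{k-1}Rm|^{2},
\end{equation*}
of which the first piece is absorbed into the left-hand side.

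The remaining step is to dominate the two tails $\int f^{c}|\nabla^{k-1}Rm|^{2}$ with $c\in\{2b-1,\,b-2/a\}$. In the base case $k=1$ these involve $|Rm|^{2}$ itself; since $\operatorname{supp} f\subset B_{u}(p,1)$ on which $|Rm|\leq 4$ by the blow-up normalization, and the volume of this ball is controlled by Bishop--Gromov using the curvature bound, we get $\int f^{c}|Rm|^{2}\leq C$ whenever $c\geq 0$, which dictates $b(1)\geq 1/2$ and $a(1)\geq 2/b(1)$. For $k\geq 2$ I invoke the inductive hypothesis at level $k-1$ with exponent $c$, requiring $c>b(k-1)$: the condition $2b-1>b(k-1)$ forces $b>b(k):=\bigl(1+b(k-1)\bigr)/2$, giving $b(k)=1-2^{-k}\to 1$, and $b-2/a>b(k-1)$ analogously pins down $a(k)$. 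Since $f\leq 1$ and $b<1$, the hypothesis then delivers $\int f^{c}|\nabla^{k-1}Rm|^{2}\leq \epsilon'\int f|\nabla^{k}Rm|^{2}+C\leq \epsilon'\int f^{b}|\nabla^{k}Rm|^{2}+C$, and the first term is absorbed.

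The main obstacle is the recursive exponent bookkeeping: every integration by parts and every Cauchy--Schwarz step shifts the power on $f$, and one must verify at each inductive level that the produced exponents $2b-1$ and $b-2/a$ remain above the threshold $b(k-1)$ needed to re-apply the hypothesis. The art lies in choosing the Cauchy--Schwarz split so that the top-order factor $\nabla^{k+1}Rm$ is paired against exactly $f^{1/2}$---producing the target $\epsilon\int f|\nabla^{k+1}Rm|^{2}$---while the leftover exponent stays large enough to survive the inductive descent; this is what ultimately determines $a(k)$ and $b(k)$.
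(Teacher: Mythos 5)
Your induction is essentially the paper's own proof: one integration by parts, Cauchy--Schwarz splittings tuned so that the top term comes out as $\epsilon\int f|\nabla^{k+1}Rm|^2$ while the tails are $\int f^{2b-1}|\nabla^{k-1}Rm|^2$ and $\int f^{b-2/a}|\nabla^{k-1}Rm|^2$, absorption of $\epsilon\int f^b|\nabla^k Rm|^2$ into the left side, and the recursive exponent conditions $2b-1>b(k-1)$, $b-\frac{2}{a}>b(k-1)$, $a(k)\ge a(k-1)$. Your explicit remark that the inductive output $\epsilon'\int f|\nabla^k Rm|^2$ can be re-absorbed because $f\le 1$ and $b<1$ give $f\le f^b$ is exactly the step the paper uses implicitly, and your handling of the Laplacian via $|\triangle(\nabla^{k-1}Rm)|\le C|\nabla^{k+1}Rm|$ is a harmless repackaging of the paper's $\nabla^{k+1}Rm*\nabla^{k-1}Rm$ term.

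The one place where your justification is wrong (though the statement you need is true) is the base case. You bound $\int f^{c}|Rm|^{2}$ for $c\ge 0$ by combining $|Rm|\le 4$ on $\operatorname{supp}f$ with a Bishop--Gromov volume bound. But $f=\psi(r)$ where $r$ is the distance to the whole orbit $\{(0,0)\}\times\mathbb{T}^{2}$, so $\operatorname{supp}f$ is a torus-invariant tube, not a geodesic ball about a point; when $D^{2}u$ degenerates the tube has uncontrolled measure, which is precisely why the paper stresses that $\int_X f\,dg$ need not be bounded, and Bishop--Gromov gives you nothing here. The correct (and simpler) justification is that $f^{c}\le 1$ and the scale-invariant quantity $\int_X|Rm|^{2}\,dg$ (equivalently the paper's bound $\int_P|Rm|^{2}\,d\mu<C_0$, controlled by the Calabi energy) is uniform along the rescaled flows; this is why the paper can start the induction at $k=0$ and call that case obvious. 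With this one-line repair, and noting that the same bounded-$L^2$-curvature fact is what makes your $k=1$ tails finite, your proof goes through as written.
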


\begin{proof}
We derive the inequality by induction. For $k=0$, it is obvious. For $k>0$, we have
\begin{eqnarray*}
& & \int f^{b} |\nabla^k Rm|^2 \\
&=& \int \nabla f^{b} * \nabla^{k-1} Rm * \nabla^k Rm + \int f^{b} \nabla^{k+1} Rm * \nabla^{k-1} Rm \\
&\leq& \epsilon \int f^{b} |\nabla^k Rm|^2 + C(\epsilon) \int f^{b-\frac{2}{a}} |\nabla^{k-1} Rm|^2 \\
& &+ \epsilon \int f |\nabla^{k+1} Rm|^2 + C(\epsilon) \int f^{2b-1} |\nabla^{k-1} Rm|^2.
\end{eqnarray*}

Choose $b(k), a(k)$ such that $$2b(k)-1 > b(k-1), \quad b(k)-\frac{2}{a(k)} > b(k-1),\quad a(k) > a(k-1),$$ then by induction, we have
$$
C(\epsilon) \int f^{b-\frac{2}{a}} |\nabla^{k-1} Rm|^2 \leq \epsilon \int |\nabla^k Rm|^2 + C(k, \epsilon)
$$
and
$$
C(\epsilon) \int f^{2b-1} |\nabla^{k-1} Rm|^2 \leq \epsilon \int |\nabla^k Rm|^2 + C(k, \epsilon).
$$
Thus we obtain the desired inequality.
\end{proof}

\begin{cor}
\label{e-inequality}
For every $k \in \mathbb{Z}^+$ and $\epsilon > 0$, there are $\bar{a}(k), C(k,\epsilon)$ such that for all $a \geq \bar{a}(k)$
\begin{eqnarray}
\int f |\nabla^i Rm|^2 < \epsilon \int f |\nabla^k Rm|^2 + C(k,\epsilon),  \quad \mbox{for} \quad 0 < i < k.
\end{eqnarray}
\end{cor}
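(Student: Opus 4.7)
The plan is to derive the corollary by iteratively applying Lemma \ref{e-inequality2} to raise the order of the derivative from $i$ up to $k$, taking advantage of the elementary fact that our cut-off satisfies $0 \leq f \leq 1$, so that $f \leq f^{b}$ for every $b \in (0,1)$.

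First I would fix $i$ with $0 < i < k$ and choose a finite decreasing sequence of exponents $1 > b(k-1) > b(k-2) > \cdots > b(i)$ together with integers $a(i) < a(i+1) < \cdots < a(k-1)$ satisfying the hypotheses of Lemma \ref{e-inequality2} at each of the orders $i, i+1, \dots, k-1$. Since $f \leq 1$, we have
\begin{equation*}
\int f \,|\nabla^{i} Rm|^{2} \;\leq\; \int f^{b(i)}\,|\nabla^{i} Rm|^{2},
\end{equation*}
and Lemma \ref{e-inequality2} with $k$ replaced by $i$ and $b = b(i)$ gives, for any $\eta_{i} > 0$,
\begin{equation*}
\int f^{b(i)}\,|\nabla^{i} Rm|^{2} \;\leq\; \eta_{i}\int f\,|\nabla^{i+1} Rm|^{2} \;+\; C(i,\eta_{i}).
\end{equation*}
Iterating: the term $\int f\,|\nabla^{i+1} Rm|^{2}$ is again bounded by $\int f^{b(i+1)}|\nabla^{i+1} Rm|^{2}$, to which Lemma \ref{e-inequality2} applies to move the derivative up to order $i+2$, and so on, until after $k-i$ applications one reaches $\int f\,|\nabla^{k} Rm|^{2}$.

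Unwinding the cascade, one obtains an inequality of the shape
\begin{equation*}
\int f\,|\nabla^{i} Rm|^{2} \;\leq\; \Bigl(\prod_{j=i}^{k-1}\eta_{j}\Bigr)\int f\,|\nabla^{k} Rm|^{2} \;+\; \sum_{j=i}^{k-1}\Bigl(\prod_{l=i}^{j-1}\eta_{l}\Bigr)C(j,\eta_{j}).
\end{equation*}
Given the prescribed $\epsilon>0$, I would choose $\eta_{i},\ldots,\eta_{k-1}$ small enough that $\prod \eta_{j} < \epsilon$; with this choice the residual constants $C(j,\eta_{j})$ combine into a single constant $C(k,\epsilon)$, giving the desired estimate. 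Finally, $\bar{a}(k)$ is taken to be $\max_{i \leq j \leq k-1} a(j)$ from the applications of Lemma \ref{e-inequality2}, and one observes that this $\bar{a}(k)$ depends only on $k$, not on $i$, so the same threshold works for all $0 < i < k$.

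There is no real obstacle here: the only point to watch is the bookkeeping of the exponent parameters $b(j)$ and the thresholds $a(j)$, which must be chosen in compatible ranges so that Lemma \ref{e-inequality2} is legitimately applicable at each of the finitely many stages of the iteration, and the choice of $\eta_{j}$ so that the accumulated multiplicative constant falls below $\epsilon$.
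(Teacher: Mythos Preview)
Your proof is correct, and it takes a genuinely different---and cleaner---route than the paper's.

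The paper argues by induction on $k$ and, at the inductive step, performs an \emph{explicit} integration by parts on $\int f|\nabla^{k-1}Rm|^{2}$ to produce terms involving $\nabla^{k}Rm*\nabla^{k-2}Rm$ and $\nabla f*\nabla^{k-1}Rm*\nabla^{k-2}Rm$; it then uses Young's inequality, the induction hypothesis, and finally Lemma~\ref{e-inequality2} only to absorb the residual term $\int f^{(a-2)/a}|\nabla^{k-2}Rm|^{2}$. Your approach, by contrast, bypasses this extra integration by parts entirely: the single observation that $0\le f\le 1$ forces $f\le f^{b}$ for every $b\in(0,1)$, so Lemma~\ref{e-inequality2} applies \emph{directly} to $\int f|\nabla^{j}Rm|^{2}$ and lifts the derivative order by one. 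Iterating $k-i$ times gives the corollary immediately. This is more economical because Lemma~\ref{e-inequality2} already encodes the relevant integration by parts; there is no need to redo it here.

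Two small points of bookkeeping to tighten: (i) there is no need for the chosen exponents to be decreasing---any fixed $b_{j}$ in the admissible range $(b(j),1)$ from Lemma~\ref{e-inequality2} works at each step; (ii) to make $\bar{a}(k)$ genuinely independent of $i$, take $\bar{a}(k)=\max_{1\le j\le k-1}a(j)$ rather than $\max_{i\le j\le k-1}a(j)$, as you implicitly acknowledge in your final sentence. Neither affects the validity of the argument.
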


\begin{proof}
We derive the inequality by induction. For $k=1$, it is obvious. For $k > 1$, we have

\begin{eqnarray*}
& & \int f |\nabla^{k-1} Rm|^2 \\
&=& \int f \nabla^k Rm * \nabla^{k-2} Rm + \int \nabla f * \nabla^{k-1} Rm * \nabla^{k-2} Rm\\
&\leq& \epsilon \int f |\nabla^k Rm|^2 + C(\epsilon) \int f |\nabla^{k-2} Rm|^2 \\
& &+ \epsilon \int f |\nabla^{k-1} Rm|^2 + C(\epsilon) \int f^{\frac{a-2}{a}} |\nabla^{k-2} Rm|^2.
\end{eqnarray*}

Hence

$$
\int f |\nabla^{k-1} Rm|^2 \leq \epsilon \int f |\nabla^k Rm|^2 + C(k, \epsilon) + C(\epsilon) \int f^{\frac{a-2}{a}} |\nabla^{k-2} Rm|^2.
$$

We choose $\bar{a}(k)$ such that 
$$\frac{\bar{a}(k)-2}{\bar{a}(k)} > b(k-2), \quad \bar{a}(k) \geq a(k).$$ Applying Proposition (\ref{e-inequality2}), we obtain the result.
\end{proof}

\begin{prop}
\label{e-inequality3}
For every positive integer $k \geq 2$ and every $p \geq 1$, there exists constants $c(k,p) \in \mathbb{Z}^+, C(k,p) > 0$ depending only on $k, p$ such that for $a > c(k,p)$ and $0 < i < k, $
$$
\int f |\nabla^i Rm|^{\frac{2p k}{i}} \leq C(k,p) \left(1 + \int f |\nabla^k Rm|^{2p} + \int f |\nabla^k Rm|^2 \right).
$$
\end{prop}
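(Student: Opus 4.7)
The plan is to prove the estimate by a Gagliardo--Nirenberg type cascade: integrate by parts to derive a recursion among the quantities $T_j := \int f|\nabla^j Rm|^{2pk/j}$ for $0<j<k$, and close the system using the standing pointwise bound $|Rm|\leq 4$ (in the parabolic neighborhood where we work), the hypothesis $\int f|Rm|^2\leq C$, and the cutoff bounds $|\nabla f|,|\triangle f|\leq Cf^{(a-1)/a}$ together with Lemma \ref{e-inequality2} and Corollary \ref{e-inequality}. Set also $T_k = \int f|\nabla^k Rm|^{2p}$ and $T_0 = \int f|Rm|^{2pk}$.

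First I would fix $k\geq 2$, $p\geq 1$, $0<i<k$ and write
$$T_i = \int f|\nabla^i Rm|^{2pk/i-2}\,\langle\nabla^i Rm,\nabla^i Rm\rangle.$$
One integration by parts (moving $\nabla$ off the right $\nabla^i Rm$ factor) produces a bulk term of the schematic form $\int f|\nabla^i Rm|^{2pk/i-2}|\nabla^{i-1}Rm||\nabla^{i+1}Rm|$ plus a cutoff error involving $\nabla f$; the contribution of $\nabla(|\nabla^i Rm|^{2pk/i-2})$ has the same shape as the bulk term and is bundled with it. I then apply the three-factor H\"older inequality to the bulk with exponents $A=\frac{2pk/i}{2pk/i-2}$, $B=\frac{2pk}{i+1}$, $C=\frac{2pk}{i-1}$; one checks $1/A+1/B+1/C=1$. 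Young's inequality absorbs the resulting factor $T_i^{1-i/(pk)}$ back to the left, producing
$$T_i \leq C\,T_{i-1}^{(i-1)/(2i)}\,T_{i+1}^{(i+1)/(2i)} + (\text{cutoff remainder}).$$
Iterating this recursion for $i=1,\ldots,k-1$ collapses it to the pure interpolation bound $T_i\leq C\,T_0^{1-i/k}T_k^{i/k}+(\text{cutoff})$. The endpoint $T_0$ is controlled by $4^{2pk-2}\int f|Rm|^2\leq C$, and Young's inequality converts $T_0^{1-i/k}T_k^{i/k}$ into $\epsilon T_k+C(\epsilon)$.

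The cutoff remainders are dominated by substituting $|\nabla f|\leq Cf^{(a-1)/a}$ and then enlarging $a=c(k,p)$ so that $(a-1)/a$ exceeds the threshold $b(k)$ appearing in Lemma \ref{e-inequality2}. Each such term is thereby bounded by $\epsilon\int f|\nabla^k Rm|^2 + C(k,p,\epsilon)$; this is exactly why the extra reserve $\int f|\nabla^k Rm|^2$ has to appear on the right-hand side of the statement, in addition to the main $\int f|\nabla^k Rm|^{2p}$ term produced by the bulk estimate.

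The main obstacle will be the bookkeeping of exponents across the cascade, together with the need to verify that every cutoff remainder can be routed through Lemma \ref{e-inequality2} / Corollary \ref{e-inequality} without leaving behind uncontrolled intermediate $L^q$ norms of $|\nabla^j Rm|$. A structural subtlety is that $\int f$ itself is unbounded --- the cutoff only localizes in the polygon direction, not along the torus fiber --- so every base-level term must be paired with at least one factor of $|Rm|^2$ or a higher covariant derivative to remain finite; this is what forces both the $+1$ and $+\int f|\nabla^k Rm|^2$ reserves in the statement.
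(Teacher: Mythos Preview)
Your strategy is the paper's: integrate by parts once, interpolate via H\"older/Young with exponents matched to $|\nabla^j Rm|^{2pk/j}$, and route the cutoff losses through Lemma~\ref{e-inequality2}. The organization differs: the paper runs an induction on $k$ rather than a simultaneous cascade in $i$. Assuming the proposition at level $k-1$ with the shifted exponent $p'=pk/(k-1)$ reduces every case $i<k-1$ to the single top case $i=k-1$, and one integration by parts on $\int f|\nabla^{k-1}Rm|^{2pk/(k-1)}$ then produces the bulk $\int f|\nabla^k Rm||\nabla^{k-2}Rm||\nabla^{k-1}Rm|^{2pk/(k-1)-2}$ together with a cutoff error carrying $f^{1-2pk/(a(k-1))}$.

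There is one step in your plan that would not close as written. After substituting $|\nabla f|\leq Cf^{(a-1)/a}$, the cutoff remainder is a \emph{high-power} integral (schematically $\int f^{1-\alpha}|\nabla^{i-1}Rm|^{2pk/i}$ after one Young absorption), not a quadratic one, so Lemma~\ref{e-inequality2} does not apply to it directly and cannot by itself produce the bound $\epsilon\int f|\nabla^k Rm|^2+C$ you assert. The paper's fix (carried out explicitly for $i=k-1$) is a second Young split with conjugate exponents $r,s$ chosen so that $f^{1-\alpha}|\nabla^{k-2}Rm|^{2pk/(k-1)}$ breaks into $f^{1-r\alpha}|\nabla^{k-2}Rm|^2$, now quadratic and handled by Lemma~\ref{e-inequality2} once $a$ is large, plus $\epsilon f|\nabla^{k-2}Rm|^{2pk/(k-2)}=\epsilon\,T_{k-2}$, which the induction hypothesis (in your scheme, the cascade itself) absorbs. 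Your plan needs this split inserted at every level. A minor aside: the recursion actually collapses to $T_i\leq C(1+T_k)$ rather than the stated $T_i\leq CT_0^{1-i/k}T_k^{i/k}$ --- at $i=1$ the exponent on $T_0$ is zero, and log-convexity of $i\mapsto i\log T_i$ between $0$ and $k$ then gives only $T_i\leq CT_k$ --- but this is harmless for the final bound.
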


\begin{proof}
We will derive this inequality by induction on $k$. Notice that $|Rm|$ is bounded in the support of $f$. For $k=2$, we have
\begin{eqnarray*}
& & \int f |\nabla Rm|^{4p} \\
&\leq& \int |\nabla f| |Rm| |\nabla Rm|^{4p-1} + C \int f |\nabla^2 Rm| |Rm| |\nabla Rm|^{4p-2} \\
&\leq& \int \left( \frac{1}{4} f |\nabla Rm|^{4p} + C f^{\frac{a-4p}{a}} |Rm|^{4p} \right) + \int \left( \frac{1}{4} f |\nabla Rm|^{4p} + C f |\nabla^2 Rm|^{2p} \right).
\end{eqnarray*}

Since $f^{\frac{a-4p}{a}} |Rm|^{4p} \leq C |Rm|^2$, we have

$$
\int f |\nabla Rm|^{4p} \leq C ( 1 + \int f |\nabla^2 Rm|^{2p} ).
$$

Now let us assume that the inequality holds up to $k-1$. Let $c(k,p) > \max({\bar{a}(k), c(k-1, pk/(k-1))})$. Then for any $i < k-1$, by induction and Corollary (\ref{e-inequality}), we have
$$
\int f |\nabla^i Rm|^{\frac{2p k}{i}} \leq C \left(1 + \int f |\nabla^{k-1} Rm|^{\frac{2pk}{k-1}} + \int f |\nabla^k Rm|^2 \right).
$$
Thus we only need to show that
$$
\int f |\nabla^{k-1} Rm|^{\frac{2p k}{k-1}} \leq C \left(1 + \int f |\nabla^k Rm|^{2p} + \int f |\nabla^k Rm|^2 \right).
$$

Using integration by parts, we get

\begin{eqnarray*}
& & \int f |\nabla^{k-1} Rm|^{\frac{2p k}{k-1}}  \\
&\leq& \int |\nabla f| |\nabla^{k-2} Rm| |\nabla^{k-1} Rm|^{\frac{2pk}{k-1}-1} + C \int f |\nabla^k Rm| |\nabla^{k-2} Rm| |\nabla^{k-1} Rm|^{\frac{2pk}{k-1}-2} \\
&\leq& \int \left( \frac{1}{4} f |\nabla^{k-1} Rm|^{\frac{2pk}{k-1}} + C f^{1-\frac{2pk}{a(k-1)}} |\nabla^{k-2} Rm|^{\frac{2pk}{k-1}} \right)+ \\
& & \int f \left( C |\nabla^k Rm|^{2p} + \frac{1}{4}|\nabla^{k-1} Rm|^{\frac{2pk}{k-1}} + \epsilon(p,k) |\nabla^{k-2} Rm|^{\frac{2pk}{k-2}} \right),
\end{eqnarray*}
where $\epsilon(p,k)$ is to be determined. The only term we need to worry about is

$$
\int f^{1-\frac{2pk}{a(k-1)}} |\nabla^{k-2} Rm|^{\frac{2pk}{k-1}}.
$$

Now let
$$
\frac{1}{s}=\frac{\frac{pk}{k-1}-1}{\frac{pk}{k-2}-1}, \quad \frac{1}{r} = 1 - \frac{1}{s}.
$$

Using Young's inequality

$$ xy \leq \frac{x^r}{r} + \frac{y^s}{s},$$

we obtain

$$
f^{1-\frac{2pk}{a(k-1)}} |\nabla^{k-2} Rm|^{\frac{2pk}{k-1}} \leq f^{1-r \frac{2pk}{a(k-1)}} \frac{|\nabla^{k-2} Rm|^2}{r} + f \frac{|\nabla^{k-2} Rm|^{\frac{2pk}{k-2}}}{s}.
$$

It is easy to see that by further increasing $c(p,k)$ then
$$ b = 1-r \frac{2pk}{a (k-1)} $$ would satisfies the assumption of Lemma (\ref{e-inequality2}). Hence

\begin{eqnarray*}
& & \int f^{1-\frac{2pk}{a(k-1)}} |\nabla^{k-2} Rm|^{\frac{2pk}{k-1}} \\
&\leq& C \int f^{1-r \frac{2pk}{a(k-1)}} |\nabla^{k-2} Rm|^2 + \epsilon(p,k) \int f |\nabla^{k-2} Rm|^{\frac{2pk}{k-2}} \\
&\leq& C \left( 1 + \int f |\nabla^{k-1} Rm|^2 \right) +C \epsilon(p,k) \int f |\nabla^{k-1} Rm|^{\frac{2pk}{k-1}}.
\end{eqnarray*}

We obtain the conclusion by choosing $\epsilon(p,k)$ sufficiently small.
\end{proof}

\begin{thm}
\label{evolution inequality}

Under our settings, we get the following evolution inequality

$$
\frac{\partial}{\partial t}\int_X f |\nabla^k Rm|^2 dg \leq -\frac{1}{4} \int_X f |\nabla^{k+2} Rm|^2 dg + C
$$

in the sense of distribution, where $C$ is a constant depending only on $k$ and the Calabi energy.
\end{thm}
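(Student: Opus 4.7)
The plan is to differentiate $\int f|\nabla^k Rm|^2\,dg$ in $t$, use the Calabi flow evolution of $Rm$ to extract the dissipative fourth-order term, and absorb every remaining contribution using the interpolation estimates of Lemma \ref{e-inequality2}, Corollary \ref{e-inequality}, and Proposition \ref{e-inequality3}. Writing $T=\nabla^k Rm$ and commuting covariant derivatives through $\partial_t Rm = -\Delta^2 Rm + \nabla^2 Rm * Rm + \nabla Rm * \nabla Rm$, I obtain
\[
\partial_t T = -\Delta^2 T + \sum_{i+j = k+2,\ i,j\ge 0} \nabla^i Rm * \nabla^j Rm.
\]
Since $\partial_t g \sim \nabla^2 R$ and $\partial_t\,dg = \Delta R\,dg$, differentiating $|T|^2\,dg$ produces additional terms of the schematic form $\nabla^2 Rm * Rm * T * T$. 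Finally $\partial_t f = \psi'(r_t)\,\partial_t r_t$; since $\partial_t r_t$ can be controlled along the realizing geodesic by an integral of the Hessian of $R-\underline{A}$ inside the region where $|Rm|\le 4$, this contribution also has the form of a weight $f^{(a-1)/a}$ times a product of covariant derivatives of $Rm$.

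The principal contribution to $\frac{d}{dt}\int f|T|^2\,dg$ is $-2\int f\langle T,\Delta^2 T\rangle\,dg$. Two integrations by parts give
\[
-2\int f|\Delta T|^2\,dg \;+\; (\text{terms pairing } \nabla f,\Delta f \text{ with } T,\nabla T,\Delta T).
\]
A Bochner-type commutator identity, together with the pointwise bound $|Rm|\le 4$ on $\mathrm{supp}\,f$, turns the leading piece into $-2\int f|\nabla^{k+2}Rm|^2\,dg$ modulo controlled lower-order integrals. After Young's inequality, all remaining terms --- the $\nabla f,\Delta f$ pieces, the nonlinear sources $\nabla^i Rm * \nabla^j Rm$ from $\partial_t T$, and the contributions of $\partial_t g$, $\partial_t\,dg$, $\partial_t f$ --- take the form $\int f^b|\nabla^i Rm|^{q_i}\,dg$ with $b\ge 1-C/a$, $0<i<k+2$, and total derivative order at most $2k+4$. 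Choosing $a$ sufficiently large so that each $b$ falls in the range required by Proposition \ref{e-inequality3} (and using Corollary \ref{e-inequality} for the intermediate $L^2$-norms), each such integral is bounded by $\epsilon\int f|\nabla^{k+2}Rm|^2\,dg + C(k,\epsilon)$.

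Picking $\epsilon$ small enough that the total absorbed coefficient stays below $7/4$ then yields the claimed inequality. The delicate step is the bookkeeping: I have to check that every multilinear integrand which really appears --- in particular those produced by $\partial_t f$ and by the commutator between $\partial_t$ and $\nabla^k$ --- satisfies the derivative-count and weight hypotheses of Proposition \ref{e-inequality3}, and that a single integer $a$ depending only on $k$ suffices for all of them simultaneously. Because the flow is fourth order and the dissipative term already contains the highest derivative count, every other contribution is strictly lower order and is \emph{a priori} controllable by interpolation; the weight estimate $|\nabla f|,|\Delta f|\le Cf^{(a-1)/a}$ built into the cutoff is exactly what allows Proposition \ref{e-inequality3} to be applied after a single choice of $a$.
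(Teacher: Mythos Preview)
Your proposal is correct and follows essentially the same route as the paper: differentiate, isolate the fourth-order dissipative term via integration by parts and the Bochner/K\"ahler commutator, then absorb every remaining multilinear integral using Lemma~\ref{e-inequality2}, Corollary~\ref{e-inequality}, and Proposition~\ref{e-inequality3}. The only places where the paper is more explicit than your sketch are (i) the exact bookkeeping of the four mixed second-derivative terms $|(\nabla^k Rm)_{,ij}|^2,\ldots$, which yields a leading coefficient closer to $-\tfrac12$ than to $-2$ (irrelevant once one aims for $-\tfrac14$), and (ii) the control of $\partial_t f$, where in addition to the $\int_\gamma |\nabla^2 A|$ contribution you mention there is a $|\nabla A|(0,0)$ term arising from the motion of the base point under the Legendre transform, also bounded by $Q$.
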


Before we get into the proof, let us derive the evolution formula for
$$
\left( \frac{\partial}{\partial t} + \triangle^2 \right) |\nabla^k Rm|^2.
$$

Let us recall that
$$
\frac{\partial Rm}{\partial t} = - \triangle^2 Rm + \nabla Rm * \nabla Rm + \nabla^2 Rm * Rm,
$$
where $\nabla = \partial + \bar{\partial}$ and $\triangle = \partial \bar{\partial} = \bar{\partial} \partial$. Then
$$
\frac {\partial \nabla^k Rm}{\partial t} = -\triangle^2 \nabla^k Rm + \sum_{i+j=k+2} \nabla^i Rm * \nabla^j Rm.
$$
So,
\begin{eqnarray*}
& & \frac{\partial}{\partial t} |\nabla^k Rm|^2 \\
&=& \langle \frac{\partial}{\partial t} \nabla^k Rm, \overline{\nabla^k Rm} \rangle + \langle \nabla^k Rm, \frac{\partial}{\partial t} \overline{\nabla^k Rm} \rangle + \nabla^2 Rm * \nabla^k Rm * \nabla^k Rm\\
&=& - \langle \triangle^2 \nabla^k Rm, \overline{\nabla^k Rm} \rangle - \langle \nabla^k Rm, \triangle^2 \overline{\nabla^k Rm} \rangle \\
& & + \sum_{i+j=k+2} \nabla^i Rm * \nabla^j Rm * \nabla^k Rm.
\end{eqnarray*}

Moreover,

\begin{eqnarray*}
& & \triangle^2 |\nabla^k Rm|^2 \\
&=& \triangle \langle \triangle \nabla^k Rm, \overline{\nabla^k Rm} \rangle + \triangle \langle \nabla^k Rm, \triangle \overline{\nabla^k Rm} \rangle \\
& & + \triangle \left( \langle \nabla_i \nabla^k Rm, \nabla_{\bar{i}} \overline{\nabla^k Rm} \rangle + \langle \nabla_{\bar{i}} \nabla^k Rm, \nabla_i \overline{\nabla^k Rm} \rangle \right) \\
&=& \langle \triangle^2 \nabla^k Rm , \overline{\nabla^k Rm} \rangle + \langle \nabla_i \triangle \nabla^k Rm, \nabla_{\bar{i}} \overline{\nabla^k Rm} \rangle\\
& & + \langle \nabla_{\bar{i}} \triangle \nabla^k Rm, \nabla_i \overline{\nabla^k Rm} \rangle + \langle \triangle \nabla^k Rm , \triangle \overline{\nabla^k Rm} \rangle \\
& & + \langle \triangle \nabla^k Rm , \triangle \overline{\nabla^k Rm} \rangle + \langle \nabla_i \nabla^k Rm, \nabla_{\bar{i}} \triangle \overline{\nabla^k Rm} \rangle\\
& & + \langle \nabla_{\bar{i}} \nabla^k Rm, \nabla_i  \triangle \overline{\nabla^k Rm} \rangle + \langle \nabla^k Rm , \triangle^2 \overline{\nabla^k Rm} \rangle \\
& & + \triangle \left( |\nabla^{k+1} Rm|^2 \right). \\
\end{eqnarray*}

Since
\begin{eqnarray*}
& & \langle \nabla_i \triangle \nabla^k Rm, \nabla_{\bar{i}} \overline{\nabla^k Rm} \rangle + \langle \nabla_i \nabla^k Rm, \nabla_{\bar{i}} \triangle \overline{\nabla^k Rm} \rangle \\
&=& \triangle |(\nabla^{k} Rm)_{, i}|^2 - |(\nabla^k Rm)_{, i j}|^2 - |(\nabla^k Rm)_{, i \bar{j}}|^2 \\
& & + Rm * \nabla^{k+1} Rm * \nabla^{k+1} Rm + \nabla Rm * \nabla^k Rm * \nabla^{k+1} Rm
\end{eqnarray*}

and

\begin{eqnarray*}
& & \langle \nabla_i \nabla^k Rm, \nabla_{\bar{i}} \triangle \overline{\nabla^k Rm} \rangle + \langle \nabla_{\bar{i}} \nabla^k Rm, \nabla_i  \triangle \overline{\nabla^k Rm} \rangle\\
&=& \triangle |(\nabla^{k} Rm)_{, \bar{i}}|^2 - |(\nabla^k Rm)_{, \bar{i} j}|^2 - |(\nabla^k Rm)_{, \bar{i} \bar{j}}|^2 \\
& & + Rm * \nabla^{k+1} Rm * \nabla^{k+1} Rm + \nabla Rm * \nabla^k Rm * \nabla^{k+1} Rm.
\end{eqnarray*}

We get

\begin{eqnarray*}
& & \left( \frac{\partial}{\partial t} + \triangle^2 \right) |\nabla^k Rm|^2 \\
&=& - |(\nabla^k Rm)_{, i j}|^2 - |(\nabla^k Rm)_{, i \bar{j}}|^2 - |(\nabla^k Rm)_{,\bar{i} j}|^2 - |(\nabla^k Rm)_{,\bar{i} \bar{j}}|^2 + 2 |\triangle \nabla^k Rm|^2 \\
& & + 2 \triangle \left( |\nabla^{k+1} Rm|^2 \right) + \sum_{i+j=k+2} \nabla^i Rm * \nabla^j Rm * \nabla^k Rm + Rm * \nabla^{k+1} Rm * \nabla^{k+1} Rm.
\end{eqnarray*}

Now we are ready to prove Theorem (\ref{evolution inequality})

\begin{proof}

\begin{eqnarray*}
& & \frac{\partial}{\partial t} \int_X  f |\nabla^k Rm|^2 \ dg\\
&=& \int_X \frac{\partial}{\partial t} \left(  f |\nabla^k Rm|^2 \right) + f |\nabla^k Rm|^2 \triangle R \ dg\\
&=& \int_X \left( \frac{\partial}{\partial t} + \triangle^2 \right)  \left( f |\nabla^k Rm|^2 \right) + f |\nabla^k Rm|^2 \triangle R \ dg\\
&=& \int_X f \left( \frac{\partial}{\partial t} + \triangle^2 \right) \left( |\nabla^k Rm|^2 \right) + \\
& & \frac{\partial f}{\partial t} |\nabla^k Rm|^2 + f \nabla^k Rm * \nabla^k Rm * \nabla^2 Rm - (\triangle f) (\triangle |\nabla^k Rm|^2) \ dg \\
&=& \int_X f \left( - |(\nabla^k Rm)_{, i j}|^2 - |(\nabla^k Rm)_{, i \bar{j}}|^2 - |(\nabla^k Rm)_{,\bar{i} j}|^2 - |(\nabla^k Rm)_{,\bar{i} \bar{j}}|^2 + 2 |\triangle \nabla^k Rm|^2 \right) \ dg + \\
& & \int_X f \left( \sum_{i+j=k+2} \nabla^i Rm * \nabla^j Rm * \nabla^k Rm + Rm * \nabla^{k+1} Rm * \nabla^{k+1} Rm \right) \ dg+ \\
& & \int_X \frac{\partial f}{\partial t} |\nabla^k Rm|^2 + (\triangle f) (  2 |\nabla^{k+1} Rm|^2  - \triangle |\nabla^k Rm|^2) \ dg
\end{eqnarray*}

Now let us deal with the above three integral one by one. For the first intergral, since

\begin{eqnarray*}
& & - \int_X f |(\nabla^k Rm)_{, i j}|^2 \\
&=& \int_X f_{\bar{i}} (\nabla^k Rm)_{, i j} (\nabla^k Rm)_{, \bar{j}} + f (\nabla^k Rm)_{, i j \bar{i}} (\nabla^k Rm)_{, \bar{j}}\\
&=& \int_X \nabla f * \nabla^{k+2} Rm * \nabla^{k+1} Rm + f (\nabla^k Rm)_{, i \bar{i} j} (\nabla^k Rm)_{, \bar{j}} + f Rm * \nabla^{k+1} Rm * \nabla^{k+1} Rm \\
&=& \int_X \nabla f * \nabla^{k+2} Rm * \nabla^{k+1} Rm - f (\nabla^k Rm)_{, i \bar{i}} (\nabla^k Rm)_{, \bar{j} j} + f Rm * \nabla^{k+1} Rm * \nabla^{k+1} Rm\\
&=& \int_X \nabla f * \nabla^{k+2} Rm * \nabla^{k+1} Rm + f \nabla^{k+2} Rm * \nabla^{k} Rm * Rm \\
& & - f (\nabla^k Rm)_{, i \bar{i}} (\nabla^k Rm)_{, j \bar{j}} + f Rm * \nabla^{k+1} Rm * \nabla^{k+1} Rm\\
&\leq& \int_X -|\triangle \nabla^k Rm|^2 + \epsilon f |\nabla^{k+2} Rm|^2 + C f^{\frac{a-2}{a}} |\nabla^{k+1} Rm|^2 + C f |\nabla^{k} Rm|^2 + C f |\nabla^{k+1} Rm|^2
\end{eqnarray*}

By Lemma (\ref{e-inequality2}) and Corollary (\ref{e-inequality}), we get
$$
- \int_X f |(\nabla^k Rm)_{, i j}|^2 \leq  C + \int_X -|\triangle \nabla^k Rm|^2 + \epsilon \int_X f |\nabla^{k+2} Rm|^2
$$

Also
\begin{eqnarray*}
& & - \int_X f |(\nabla^k Rm)_{, i \bar{j}}|^2 \\
&=& - \int_X f (\nabla^k Rm)_{, i \bar{j}} (\nabla^k Rm)_{, j \bar{i}} + f \nabla^{k+2} Rm * \nabla^k Rm * Rm \\
&=& \int_X f_{\bar{j}} (\nabla^k Rm)_{, i} (\nabla^k Rm)_{, \bar{i} j} + f (\nabla^k Rm)_{, i } (\nabla^k Rm)_{, j \bar{i} \bar{j}} + f \nabla^{k+2} Rm * \nabla^k Rm * Rm\\
&=& \int_X \nabla f * \nabla^{k+2} Rm * \nabla^{k+1} Rm + f (\nabla^k Rm)_{, i} (\nabla^k Rm)_{, j \bar{j} \bar{i} } + f \nabla^{k+2} Rm * \nabla^k Rm * Rm \\
&=& \int_X \nabla f * \nabla^{k+2} Rm * \nabla^{k+1} Rm - f (\nabla^k Rm)_{, i \bar{i}} (\nabla^k Rm)_{,j \bar{j}} + f \nabla^{k+2} Rm * \nabla^k Rm * Rm\\
&\leq& \int_X -|\triangle \nabla^k Rm|^2 + \epsilon f |\nabla^{k+2} Rm|^2 + C f^{\frac{a-2}{a}} |\nabla^{k+1} Rm|^2 + C f |\nabla^{k} Rm|^2.
\end{eqnarray*}

By Lemma (\ref{e-inequality2}) and Corollary (\ref{e-inequality}), we get
$$
- \int_X f |(\nabla^k Rm)_{, i \bar{j}}|^2 \leq  C + \int_X -|\triangle \nabla^k Rm|^2 + \epsilon \int_X f |\nabla^{k+2} Rm|^2
$$

Hence
\begin{eqnarray*}
& & \int_X f \left( - |(\nabla^k Rm)_{, i j}|^2 - |(\nabla^k Rm)_{, i \bar{j}}|^2 - |(\nabla^k Rm)_{,\bar{i} j}|^2 - |(\nabla^k Rm)_{,\bar{i} \bar{j}}|^2 + 2 |\triangle \nabla^k Rm|^2 \right) \\
&\leq& C - (\frac{1}{2} -\epsilon) \int_X f |\nabla^{k+2} Rm|^2
\end{eqnarray*}

Now we estimate the second integral. For $i > 0, j >0$, by Proposition (\ref{e-inequality3}), we have

\begin{eqnarray*}
& & \left| \int_X f \nabla^i Rm * \nabla^j Rm * \nabla^k Rm \right| \\
&\leq& \left( \int_X f |\nabla^i Rm|^{\frac{2k+4}{i}} \right)^{\frac{i}{2k+4}} \left( \int_X f |\nabla^j Rm|^{\frac{2k+4}{j}} \right)^{\frac{j}{2k+4}} \left( \int_X f |\nabla^k Rm|^2 \right) ^{\frac{1}{2}} \\
&\leq& \left( C + C \int_X f |\nabla^{k+2} Rm|^2 \right)^{\frac{i}{2k+4}} \left( C + C \int_X f |\nabla^{k+2} Rm|^2 \right)^{\frac{j}{2k+4}} \left( C +  \epsilon \int_X f |\nabla^{k+2} Rm|^2 \right) ^{\frac{1}{2}} \\
&\leq& C + \epsilon \int_X f |\nabla^{k+2} Rm|^2.
\end{eqnarray*}

Also we have
$$
\left|\int_X f Rm * \nabla^{k} Rm * \nabla^{k+2} Rm \right| \leq  C + \epsilon \int_X f |\nabla^{k+2} Rm|^2,
$$
$$
\left|\int_X f Rm * \nabla^{k+1} Rm * \nabla^{k+1} Rm \right| \leq  C + \epsilon \int_X f |\nabla^{k+2} Rm|^2.
$$

Now we try to control the third integral. Notice that we pick $(0,0)$ in the symplectic side. So it will move in the complex side as $t$ changes. By Legendre transformation, the coordinate of $(0,0)$ in the complex side is $\xi_t = D u_t(0,0)$. We set this curve to be $\tilde{c}(s)$, parameterizing by $s=t$. For any point $\xi$ in the complex side, let $\gamma_t$ be the minimizing geodesic connecting $\xi_t$ and $\xi$ at time $t$. We first control the upper bound of $\frac{\partial d_t}{\partial t}(\xi, \xi_t)$ at $t=t_0$. Let $t \rightarrow t_0^+$. We can replace $\gamma_t$ by $\gamma_{t_0}$ plus the part of $\tilde{c}$ connecting $\xi_{t_0}$ and $\xi_t$. Let $\bar{\gamma}_t$ be the part of $\tilde{c}$ connecting $\xi_{t_0}$ and $\xi_t$. Let $\tilde{d}_t$ be its Riemann length at $t$. Then
$$
\frac{\partial \tilde{d}_t}{\partial t} (t_0) = |\nabla A_{t_0}|_{u_{t_0}} (0,0).
$$
By our definition of $Q$, we know that $|\nabla A_{t_0}|_{u_{t_0}}(0,0)$ is bounded.  Let $|\gamma|_u$ be the Riemannian length of $\gamma$. Then in the sense of distribution, we have
$$
\frac{\partial d_t(\xi, \xi_t)}{\partial t} |_{t=t_0} \leq  \frac{\partial |\gamma_{t_0}|_{u_t}}{\partial t}(t_0) + C.
$$
Notice that if the distance between $\xi_0$ and $\xi$ is less than 2, we have
$$
\frac{\partial |\gamma_{t_0}|_{u_t}}{\partial t}(t_0)  \leq \int_{\gamma_{t_0}} |\nabla^2 A|_u < C.
$$
So we have obtained the upper bound of $\frac{\partial d_t}{\partial t}(\xi, \xi_t)$ at $t=t_0$ in the sense of distribution. Next we want to get the lower bound of $\frac{\partial d_t}{\partial t}(\xi, \xi_t)$ at $t=t_0$ in the sense of distribution. Notice that $$d_t (\xi,\xi_t) \geq d_t (\xi, \xi_0) - |\bar{\gamma}_t|_{u_t}.$$ So we only need to get a lower bound of $\frac{\partial d_t(\xi, \xi_0)}{\partial t}|$ at $t=t_0$. It is well known that
$$
\frac{\partial d_t(\xi, \xi_0)}{\partial t}(\xi, \xi_t) \geq \frac{\partial |\gamma_{t_0}|_{u_t}}{\partial t}(t_0)  \geq - \int_{\gamma_{t_0}} |\nabla^2 A|_u > - C.
$$

So we conclude that $\frac{\partial f}{\partial t}$ is bounded in the sense of distribution.

Together with

$$
\triangle |\nabla^k Rm|^2 = \nabla^{k+2} Rm * \nabla^k Rm + \nabla^{k+1} Rm * \nabla^{k+1} Rm,
$$

we have
\begin{eqnarray*}
& & \left| \int_X \frac{\partial f}{\partial t} |\nabla^k Rm|^2 + (\triangle f) ( 2 |\nabla^{k+1} Rm|^2 - \triangle |\nabla^k Rm|^2) \right| \\
&\leq& \int_X C f^{\frac{a-1}{a}} |\nabla^k Rm|^2 + \epsilon f |\nabla^{k+2} Rm|^2 + C f^{\frac{a-2}{a}} |\nabla^k Rm|^2 + C f^{\frac{a-1}{a}} |\nabla^{k+1} Rm|^2\\
&\leq& C + \epsilon \int_X f |\nabla^{k+2} Rm|^2.
\end{eqnarray*}

Combining the above results together, we get the conclusion.
\end{proof}

Now let us define
$$
F_k(t)=\sum_{i=0}^k t^i \int_X f |\nabla^i Rm|^2 \ d g(t-1),
$$
where $t \in [0,1]$. Then
\begin{eqnarray*}
\frac{\partial F_k}{\partial t} &=& \sum_{i=1}^k it^{i-1} \int_X f |\nabla^i Rm|^2 + \sum_{i=0}^k t^i \frac{\partial}{\partial t} \int_X f |\nabla^i Rm|^2 \\
&\leq& \sum_{i=1}^k it^{i-1} \int_X f |\nabla^i Rm|^2 + \sum_{i=0}^k t^i \left( -\frac{1}{4} \int_X f |\nabla^{i+2} Rm|^2 + C \right) \\
&\leq& \sum_{i=0}^{k-1} t^i  \left( -\frac{1}{4} \int_X f |\nabla^{i+2} Rm|^2 + (i+1)\int_X f |\nabla^{i+1} Rm|^2 \right) + C\\
&\leq& C
\end{eqnarray*}

Hence $F(1)-F(0) \leq C$. Then we have the following result:

\begin{cor}
At time $t=0$, for every $k \in \mathbb{Z}^+$, there is a constant $C(k)$ depending only on $k$, such that

$$
\int_X f |\nabla^k Rm|^2 \ dg < C(k)
$$

Especially,

$$
\int_{B(p_0,\frac{1}{2})} |\nabla^k Rm|^2 \ d \mu< C(k)
$$

where $p_0 = (0,0), B(p_0,\frac{1}{2})$ is a geodesic ball in $P$ and $d \mu$ is the standard Euclidean measure on $P$. 
\end{cor}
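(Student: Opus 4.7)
The plan is to extract the Corollary directly from the monotonicity $F_k(1) - F_k(0) \leq C$ already derived from the evolution inequality, once $F_k(0)$ is uniformly controlled. Since $t^i$ vanishes at $t = 0$ for $i \geq 1$, only the base term survives and
$$
F_k(0) \;=\; \int_X f\,|Rm|^2\,dg(-1).
$$
The temptation is to bound this by $16\int_X f\,dg(-1)$ using $|Rm| \leq 4$ on supp$\,f$, but the paper has already warned that $\int_X f\,dg$ need not be uniformly controlled: the $\mathbb{T}^2$-fibre through $p_0$ can carry arbitrarily large Riemannian area, so the tubular neighbourhood on which $f$ is supported can have unbounded volume. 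The cleanest workaround is to drop $f$ and pass to the global bound $F_k(0) \leq \int_X |Rm|^2\,dg(-1)$: on a K\"ahler surface, the Chern--Weil / Gauss--Bonnet identities express $\int_X |Rm|^2\,dg$ as a fixed linear combination of $\int_X R^2\,dg$ and the topological invariants $\chi(X), \tau(X)$. The Calabi energy $\int_X R^2\,dg$ is monotone non-increasing along the unscaled flow and is scale-invariant in real dimension four, hence it passes unchanged to the blown-up potentials $u_i$; thus $F_k(0) \leq C$ with $C$ independent of $k$.

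Combining with $F_k(1) - F_k(0) \leq C(k)$ gives $F_k(1) \leq C'(k)$. Since
$$
F_k(1) \;=\; \sum_{i=0}^{k} \int_X f\,|\nabla^i Rm|^2\,dg(0)
$$
is a sum of non-negative terms, each summand, and in particular the $i = k$ one, is bounded by $C'(k)$, which proves the first inequality of the Corollary.

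For the ``especially'' statement I would exploit the toric symmetry. In the Delzant coordinates $(x,\theta) \in P \times \mathbb{T}^2$ the K\"ahler metric is block-diagonal, $g = u_{ij}\,dx^i dx^j + u^{ij}\,d\theta^i d\theta^j$, so $\det g \equiv 1$ and the Riemannian volume form collapses to $dg = dx\,d\theta$. Both $f = \psi(r)$ and $|\nabla^k Rm|^2$ depend only on $x$, so integrating out the torus fibre gives
$$
\int_X f\,|\nabla^k Rm|^2\,dg \;=\; (2\pi)^2 \int_P \psi(r)\,|\nabla^k Rm|^2\,d\mu.
$$
Since $\psi \equiv 1$ on $[0, 1/2]$, we have $\psi(r) \geq \mathbf{1}_{B(p_0, 1/2)}$, and discarding the non-negative integrand outside the ball yields the stated Euclidean-measure estimate on $B(p_0, 1/2) \subset P$ up to the fixed factor $(2\pi)^{-2}$. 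The only genuine subtlety is the Chern--Weil / scale-invariance step bounding $\int_X |Rm|^2\,dg$; the rest is mechanical unwinding of the monotonicity already in hand.
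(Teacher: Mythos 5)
Your proposal is correct and follows essentially the same route as the paper: the corollary is read off from the bound $F_k(1)-F_k(0)\leq C$ obtained from Theorem (\ref{evolution inequality}), with $F_k(0)=\int_X f|Rm|^2\,dg(-1)$ controlled by the scale-invariant, Chern--Weil-bounded quantity $\int_X |Rm|^2\,dg$ (the paper's implicit use of ``$\int f|Rm|^2$ is bounded''), and the ``especially'' part obtained by integrating out the $\mathbb{T}^2$-fibre using $dg=d\mu\,d\theta$ and $f\equiv 1$ on $B(p_0,\tfrac12)$. You merely make explicit two details the paper leaves implicit, and both are filled in correctly.
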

Now applying the arguments in the appendix of \cite{FH}, we have
\begin{cor}
For any $p \in B(p_0,\frac{1}{2})$.
$$
|\nabla^k Rm| (p)< C(k).
$$
\end{cor}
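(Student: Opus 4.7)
The plan is to upgrade the $L^2$ integral bounds of the previous corollary to pointwise bounds via a Euclidean bootstrap of the symplectic potential $u$ on the Euclidean ball $B(p_0, 1/2) \subset P \subset \mathbb{R}^2$, following the argument in the appendix of \cite{FH}. The key observation that makes this work, despite the absence of a global Sobolev constant, is that all quantities in question are $\mathbb{T}^2$-invariant functions on the polygon, so we may simply apply standard Euclidean Sobolev embedding on a fixed ball in $\mathbb{R}^2$.

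The first step is to establish that $D^2 u$ is uniformly equivalent to the identity on a fixed Euclidean neighborhood of $p_0$. The upper bound $D^2 u \leq C \, I$ follows by combining the pointwise bound $|Rm| \leq 4$ on $B_u(p_0, 2)$ with the rescaling-invariant $M$-condition and Lemma (\ref{Upper bound of Hessian}). For the matching lower bound (equivalently, an upper bound on $u^{ij}$), one integrates the ODE supplied by Lemma (\ref{coordinate}), namely $|u^{zz}_{~xx}| \leq |Rm| \, u^{zz} \, u_{xx}$, along line segments through $p_0$: with $u_{xx}$ already controlled from above and $|Rm|$ bounded, this yields control of $\log u^{zz}$, and similarly for all directions, giving $C^{-1} I \leq D^2 u \leq C \, I$ on $B(p_0, 1/2)$.

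With the Riemannian and Euclidean metrics now uniformly comparable on $B(p_0, 1/2)$, the weighted integrals $\int f |\nabla^k Rm|^2 \, dg$ translate into ordinary Euclidean $L^2$ integrals on the same ball. Each covariant derivative $\nabla^k Rm$ is an algebraic expression in components of $u^{ij}$ and of Euclidean derivatives $D^{k+2} u$, with lower-order corrections involving fewer Euclidean derivatives of $u$. Using the two-sided Hessian bound, this gives inductively a Euclidean $L^2$ bound on $D^{k+2} u$ from the $L^2$ bound on $|\nabla^k Rm|$ and the already-controlled lower-order terms.

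Finally, since $B(p_0, 1/2)$ is a fixed domain in $\mathbb{R}^2$, the classical Euclidean Sobolev embedding $W^{m,2} \hookrightarrow C^{m-2}$ for $m \geq 2$ promotes the $L^2$ bounds on all orders of Euclidean derivatives of $u$ to pointwise $C^m$ bounds on $u$ on a slightly smaller ball, for every $m$. Expressing $|\nabla^k Rm|$ back in terms of $u^{ij}$ and $D^{k+2} u$ then yields the desired pointwise bound $|\nabla^k Rm|(p) < C(k)$. The main obstacle in this plan is the two-sided Hessian comparison of the first step; once it is in place, the remainder is a routine bootstrap on a Euclidean ball, precisely the setting in which the lack of a global Sobolev constant is irrelevant.
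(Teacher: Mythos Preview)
Your outline matches the strategy the paper invokes by citing the appendix of \cite{FH}: establish a two-sided Hessian comparison near $p_0$, convert the $L^2$ curvature bounds into Euclidean $W^{k+2,2}$ bounds on $u$, and finish with ordinary Sobolev embedding on a fixed domain in $\mathbb{R}^2$. The bootstrap portion is exactly right, and you have correctly identified that the whole argument hinges on the Hessian comparison.

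There is, however, a genuine gap in that first step as you have written it. Both ingredients you cite yield only \emph{relative} control: integrating the inequality of Lemma~\ref{coordinate} bounds the oscillation of $\log u^{zz}$ along a segment, not its value, and Lemma~\ref{Upper bound of Hessian} needs a line through $p_0$ of definite Euclidean length on which $\int_l |Rm|^2$ is bounded---but before any Hessian control is in place, the geodesic ball $B_u(p_0,2)$ on which $|Rm|\le 4$ could be arbitrarily small in the Euclidean sense, so no such line is available a priori. In other words, neither argument pins down $D^2 u(p_0)$ itself, and without that your conclusion $C^{-1}I \le D^2 u \le C I$ does not follow. The repair is simple and is what the paper does later in the proof of Theorem~\ref{key}: first apply an affine change of coordinates on $P$ so that $D^2 u(p_0)=I_2$. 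This leaves every $|\nabla^k Rm|$ and the geodesic ball $B_u(p_0,2)$ unchanged. After normalizing, the pointwise bound $|Rm|\le 4$ together with the ODE of Lemma~\ref{coordinate} (equivalently Donaldson's Lemmas~7--8 in \cite{D3}) propagates $D^2 u \approx I$ across a definite Euclidean ball, and then your Euclidean bootstrap goes through verbatim.
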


Moreover we have
\begin{cor}
For any $C > 0$, there exists an $i(C) \in \mathbb{Z}^+$ such that for any $i > i(C), p \in B(p_0, C)$, we have
$$
|\nabla^k Rm^{(i)}| (p)< C(k).
$$
\end{cor}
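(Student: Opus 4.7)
The strategy is to extend the pointwise bound of the previous corollary from $B(p_0, 1/2)$ to an arbitrary ball $B(p_0, C)$, at the cost of restricting to $i \geq i(C)$. I plan to rerun the cut-off function and integral-inequality machinery of this subsection essentially verbatim, but with the cut-off supported on a larger geodesic ball. The only input needed is a uniform curvature bound on the enlarged parabolic region, and the blow-up normalization supplies this for free.

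To obtain the uniform bound, combine the distance-comparison proposition proved earlier in this section,
$$
d_{u_i(t)}(p_0, \partial P^{(i)}_{\epsilon_0}) \;\geq\; c\, d_{u_i(0)}(p_0, \partial P^{(i)}_{\epsilon_0}) \;=\; c\sqrt{L_i}\qquad (t\in[-1,0]),
$$
with the fact that $L_i \to \infty$. Choose $i(C)$ so that $c\sqrt{L_i} \geq 8C$ for $i \geq i(C)$. Then $B_{u_i(t)}(p_0, 4C) \subset P^{(i)}_{\epsilon_0}$ on $[-1,0]$, and for every $p$ in this parabolic region the triangle inequality together with the maximality property defining $L_i$ gives
$$
Q^{(i)}(t,p) \;\leq\; \frac{L_i}{\bigl(c\sqrt{L_i} - 4C\bigr)^2} \;\leq\; \frac{4}{c^2},
$$
an $i$-independent bound on $|Rm^{(i)}|$, $|\nabla Rm^{(i)}|$ and $|\nabla^2 Rm^{(i)}|$ throughout $B_{u_i(t)}(p_0, 4C) \times [-1,0]$.

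Next I replace the cut-off $\psi:[0,2]\to[0,1]$ used in the previous subsection by $\psi_C:[0, 4C]\to[0,1]$ with $\psi_C \equiv 1$ on $[0, 2C]$ and $|\psi_C'|, |\psi_C''| \leq C' \psi_C^{(a-1)/a}$, and set $f = \psi_C(r_t)$. The comparison-geometry derivation of the bound on $|\triangle r|$ on the annulus uses only the curvature bound just established, so it still applies (with constants now depending on $C$). The interpolation lemmas, the evolution inequality for $\int f |\nabla^k Rm|^2 \, dg$, and the monotonicity of $F_k(t) = \sum_i t^i \int f |\nabla^i Rm|^2 \, dg(t-1)$ all go through with the same proofs, yielding $\int_{B(p_0, 2C)} |\nabla^k Rm^{(i)}|^2 \, d\mu \leq C(k, C)$. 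The elliptic-regularity argument of the appendix of \cite{FH} then upgrades this $L^2$ estimate on $B(p_0, 2C)$ to the pointwise bound on the concentric ball $B(p_0, C)$.

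The only real obstacle is bookkeeping: confirming that no constant in the long chain of estimates (the comparison-geometry bound on $|\triangle r|$, the Young and integration-by-parts interpolation inequalities, the time-derivative bound on the cutoff $f$, and the elliptic promotion step) picks up $i$-dependence. This is guaranteed by the uniformity of the curvature bound established in the second paragraph, and no new geometric idea beyond those of the preceding corollary is required.
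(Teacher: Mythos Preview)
Your proposal is correct and matches the paper's approach: the paper states this corollary without proof, evidently intending exactly the argument you supply---rerun the cut-off machinery with a cutoff supported on $B(p_0,4C)$ after observing that the blow-up normalization $d_{u_i(t)}(p_0,\partial P^{(i)}_{\epsilon_0})\geq c\sqrt{L_i}\to\infty$ furnishes a uniform $Q$-bound on the enlarged parabolic region once $i$ is large. Your bookkeeping remark is apt; nothing in the chain of lemmas (the $|\triangle r|$ bound, the interpolation inequalities, the evolution inequality, the $\partial f/\partial t$ control via $|\nabla^2 A|\leq Q$, and the \cite{FH} appendix step) introduces $i$-dependence beyond the uniform curvature bound, so the argument goes through with constants depending only on $C$ and $k$.
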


Now we are ready to prove Theorem (\ref{key}).

\begin{proof}[Proof of Theorem (\ref{key})]
We are essentially dealing two cases. The first case is that for sufficiently large $i$, $|Rm^{(i)}(0, p_0)|$ has a uniform lower bound. The arguments of  \cite{FH} can be applied to rule out this case. The second case is that there is a sequence of $i$ such that  $|Rm^{(i)}(0, p_0)| \rightarrow 0$. Then after an affine transformation, we can make $(D^2 u^{(i)})(p_0) = I_2$. Again we apply the arguments in \cite{FH} to get a limit metric in $\mathbb{R}^2 \times \mathbb{T}^2$. The contradiction comes from the fact that the limit metric is a flat metric but $Q^{(\infty)}(p_0) = 1$.
\end{proof}

\section{Main Theorem}
We prove our main theorem, Theorem (\ref{main}), in this section. It is clear that we can bound $Q$ in $P_{\epsilon_0}$. In fact, by modifying $Q$, we can bound $|\nabla^k Rm|$ in $P_{\epsilon_0}$ for any $k$. Since $u(t)$ satisfies the $M$-condition, we conclude that $(D^2 u_t) < C I_2$ in $P_{\epsilon_0}$. By Lemma (\ref{comparison}) and Corollary (\ref{interior control}), we obtain $(D^2 u_t) > C I_2$ in $P_{\epsilon_0}$. Similar arguments in Section 5 of \cite{FH} provide us the higher regularity of $u_t$.

\end{document}